\newcommand{\RN}{\mathbb{R}} 
\newcommand{\CN}{\mathbb{C}} 
\newcommand{\ZN}{\mathbb{Z}} 
\newcommand{\eps}{\ensuremath\varepsilon}
\newcommand{\ovl}{\overline}
\newcommand{\p}{\partial}
\renewcommand{\l}{\lambda}
\newcommand{\mc}{\mathcal}
\newcommand{\mb}{\mathbf}
\newcommand{\B}{\mathbb}
\newcommand{\Bun}{\operatorname{Bun}}
\newcommand{\abs}[1]{\left\lvert #1 \right\rvert}
\newcommand{\Abs}[1]{\left\Vert #1\right\Vert}
\newcommand{\Tr}{\operatorname{Tr}}
\newcommand{\ex}[1]{\left\langle #1 \right\rangle}
\newcommand{\bx}{\mathbf{x}}
\newcommand{\by}{\mathbf{y}}
\newcommand{\bz}{\mathbf{z}}
\newcommand{\bw}{\mathbf{w}}
\newcommand{\bu}{\mathbf{u}}
\renewcommand{\bar}{\overline}
\renewcommand{\a}{\alpha}
\renewcommand{\b}{\beta}
\newcommand{\g}{\gamma}
\renewcommand{\l}{\lambda}
\begin{document}
\newtheorem{thr}{Theorem}[section]
\newtheorem*{thr*}{Theorem}
\newtheorem{lem}[thr]{Lemma}
\newtheorem*{lem*}{Lemma}
\newtheorem{cor}[thr]{Corollary}
\newtheorem*{cor*}{Corollary}
\newtheorem{prop}[thr]{Proposition}
\newtheorem*{prop*}{Proposition}
\newtheorem{stat}[thr]{Statement}
\newtheorem*{stat*}{Statement}

\theoremstyle{definition}
\newtheorem{defn}[thr]{Definition}
\theoremstyle{remark}
\newtheorem{rem}[thr]{Remark}
\newtheorem*{rem*}{Remark}
\newtheorem{example}[thr]{Example}

\title{Multiplication Kernels for the Analytic Langlands Program in Genus Zero}
\author{Daniil Klyuev \and Sanjay Raman}
\date{\today}

\maketitle

\abstract{We provide an explicit proof of a recent result of Gaiotto \cite{Gaiotto2021} which gives an explicit formula for a so-called ``multiplication kernel'' $K_3(x, y, z; t)$ intertwining the action of Hecke operators and Gaudin operators in three sets of variables. This function $K_3$ arises naturally in the context of the analytic formulation of the geometric Langlands program in the genus-zero case \cite{EFK1,  EFK3, EFK2}. We also discuss how the kernel $K_3$ relates to other objects typically considered in the analytic Langlands program.}

\tableofcontents

\onehalfspacing

\pagebreak
\section{Introduction} 

Recently, Etingof, Frenkel, and Kazhdan devised a version of the Langlands correspondence in terms of functional analysis \cite{EFK1, EFK3, EFK2}. Taking a smooth irreducible projective curve $X$ over $\B{C}$, they formulated the geometric Langlands duality as a correspondence between the joint spectrum of global differential operators (\textit{quantum Hitchin Hamiltonians}) acting on a Hilbert space $\mathcal{H}$ of half-densities on $\mathrm{Bun}_G(X)$ and certain points in the variety $\mathrm{Op}_{\tensor[^L]{G}{}}(X)$ of $\tensor[^L]{G}{}$-opers over $X$ (more precisely, the $\tensor[^L]{G}{}$-opers with \textit{real monodromy}) \cite{EFK1}. To generalize the problem to local fields $F$, integral \textit{Hecke operators} were introduced in \cite{Braverman05}; these operators strongly commute with the quantum Hitchin Hamiltonians over $\mathbb{C}$ and are defined over arbitrary local fields $F$. However, the joint spectrum of the Hecke operators is still not well understood over arbitrary local fields. 

To get a handle on the analytic Langlands program, it is thus important to investigate the joint spectrum of the Hecke operators and the quantum Hitchin Hamiltonians. Gaiotto recently made an intriguing claim in this direction in the genus-zero case with $G = PGL_2$ \cite{Gaiotto2021}. In particular, taking $F = \mathbb{C}$, $X = \mathbb{P}^1$, and $G = PGL_2$, Gaiotto described a kernel $K_3(x, y, z; t)$ in three sets of variables intertwining the action of the quantum Hitchin Hamiltonians $G_i$ in genus zero (\textit{Gaudin operators}):
\[ G_i^x K_3 = G_i^y K_3 = G_i^z K_3. \]
The precise definitions of the Gaudin operators will be given in Section \ref{sec:prelim}. Gaiotto's result is stated below: 
\begin{thr} \label{thm:main}
Define a matrix $A_{ij}$ with entries as follows: 
\[ A_{ij}=\frac{(x_i-x_j)(y_i-y_j)(z_i-z_j)}{t_i-t_j}, \ 0 \leq i \neq j \leq m \]
and $A_{ii} = 0$. 
Then
\[ K_3(x, y, z; t) = \frac{1}{\abs{\det A}} \]
satisfies the requisite intertwining property for the Gaudin operators. 
\end{thr}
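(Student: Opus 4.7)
The plan is to verify the intertwining identity by direct calculation, leveraging the $S_3$-symmetry of $A_{ij}$ in the triple of variable sets $(x, y, z)$. Since the matrix entries $A_{ij}$ --- and hence $K_3 = |\det A|^{-1}$ --- are manifestly invariant under permutations of $(x, y, z)$, it suffices to establish $G_i^x K_3 = G_i^y K_3$; the equality $G_i^y K_3 = G_i^z K_3$ will then follow by the same argument after cyclic relabeling. Moreover, because the Gaudin operators $G_i^x$ in the EFK framework are holomorphic differential operators in the $x$-variables, applying the chain rule to $K_3 = (\det A \cdot \overline{\det A})^{-1/2}$ shows that the antiholomorphic factor $\overline{(\det A)}^{-1/2}$ passes through $G_i^x$ as a multiplicative constant. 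The problem thus reduces to the meromorphic identity $G_i^x \phi = G_i^y \phi$ with $\phi = (\det A)^{-1/2}$, to be verified on a suitable branch domain.

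The key structural observation is the commutator presentation
\[
A = [X, D] = [Y, E] = [Z, F],
\]
where $X = \mathrm{diag}(x_0, \ldots, x_m)$, $D_{ij} = \frac{(y_i - y_j)(z_i - z_j)}{t_i - t_j}$ for $i \neq j$ with $D_{ii} = 0$, and $Y, E, Z, F$ are obtained by permuting the variable sets. In particular, $\partial_{x_k} A = [e_{kk}, D]$ is a commutator with a rank-one projection, so Jacobi's formula collapses to $\partial_{x_k} \log \det A = \operatorname{tr}(A^{-1}[e_{kk}, D]) = [D, A^{-1}]_{kk}$, and a second application yields closed-form expressions for $\partial_{x_k}\partial_{x_l} \log \det A$ in terms of entries of $A^{-1}$ and of $[D, A^{-1}]$.

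Substituting these into the general identity $(G\phi)/\phi = \sum c_{ab}(\partial_a \psi \, \partial_b \psi + \partial_a \partial_b \psi) + \sum d_a \partial_a \psi$ --- valid for any second-order operator $G = \sum c_{ab} \partial_a \partial_b + \sum d_a \partial_a$ and $\phi = e^\psi$ --- with $\psi = -\tfrac{1}{2} \log \det A$ and $G$ equal to the explicit genus-zero $\mathfrak{sl}_2$-Gaudin operator $G_i^x$, produces an expression for $G_i^x \phi / \phi$ as a weighted sum of traces involving $A^{-1}$, $[D, A^{-1}]$, and factors $1/(t_i - t_j)$ and $1/(x_i - x_j)$. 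The analogous calculation on the $y$-side gives the same shape of expression but with $D$ replaced by $E$ and $x$-differences replaced by $y$-differences throughout.

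The main obstacle, and principal content of the proof, is to show that these two expressions coincide. The strategy is to rewrite the difference so that $X$ and $D$ (resp.\ $Y$ and $E$) appear only in the combination $A = [X, D]$, using $[X, D] = [Y, E]$ to swap them; the residual spurious poles of the form $1/(x_i - x_j)$ and $1/(y_i - y_j)$ should then cancel against one another by classical partial-fraction identities of Cauchy-matrix type. Should this direct manipulation prove too intricate, a conceptual fallback is to identify $K_3$ with a spectral generating kernel via $(\mathfrak{gl}_2, \mathfrak{gl}_m)$-duality or via an integral representation of Gaudin eigenfunctions, which would make the intertwining property automatic.
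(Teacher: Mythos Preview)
Your reductions are correct and match the paper: the $S_3$-symmetry in $(x,y,z)$ reduces the claim to a single pairwise equality, and the antiholomorphic factor drops out so that one need only show $G_i^y(\det A)^{-1/2}=G_i^z(\det A)^{-1/2}$ (or the $x,y$ version you chose). Your commutator presentation $A=[X,D]$ and use of Jacobi's formula is a clean way to compute the logarithmic derivatives; the paper instead writes $(\det A)^{-1/2}$ as a formal Gaussian integral and applies Wick's theorem, but both routes produce the same expression for $(G_r\phi)/\phi$ as a sum of terms quadratic and linear in the entries $b_{ij}=(A^{-1})_{ij}$ with coefficients rational in $x,y,z,t$.

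The genuine gap is at the step you correctly flag as ``the main obstacle.'' You give only a strategy (``rewrite so that $X$ and $D$ appear only via $A=[X,D]$''), but this is precisely what does \emph{not} happen automatically: the Gaudin operator $G_i^x$ carries explicit polynomial factors $(x_i-x_j)^2$ and $(x_i-x_j)$ that are not encoded in $A$ alone, so the resulting expression is not manifestly symmetric and cannot be made so by a formal swap of $[X,D]$ for $[Y,E]$. The paper's proof at this point is substantial and specific: after separating $(G_r\phi)/\phi$ into a degree-2 piece $\Omega_r^{(2)}$ and a degree-1 piece $\Omega_r^{(1)}$ in the $b_{ij}$, the degree-2 piece is symmetrized in two summation indices (using a partial-fraction identity in the $t$'s), then split via $(y_r-y_p)=(y_r-y_m)+(y_m-y_p)$, and finally shown to be $y\leftrightarrow z$ symmetric via the cyclic identity $H_{rmsp}+H_{rspm}+H_{rpms}=0$ for $H_{rmsp}(x)=(x_r-x_m)(x_s-x_p)$; the degree-1 leftovers are handled by a separate symmetrization. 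None of this is visible from your outline, and your remark about ``spurious poles of the form $1/(x_i-x_j)$'' is off target --- the only denominators in the problem are $t_i-t_j$ and $\det A$, and it is partial-fraction identities in the $t$-variables (not Cauchy identities in $x$ or $y$) that do the work. Your conceptual fallback via duality or integral representations is not a proof either; it would need its own argument at least as long as the direct one.
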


Gaiotto gave an argument for his formula using physical insight. In this paper, we furnish a full mathematical proof of Theorem \ref{thm:main} (and a generalization thereof) and discuss its implications. The organization of this paper is as follows. In Section \ref{sec:prelim}, we introduce in further detail the relevant mathematical preliminaries needed to formulate Gaiotto's result precisely. In Section \ref{sec:proof}, we produce an explicit proof of Gaiotto's result, inspired in some part by the physical origin of the formula. We also prove a suitable generalization of Gaiotto's formula. In Section \ref{sec:discussion}, we discuss the implications of our result and consider further avenues for exploration. 

\section{Preliminaries}
\label{sec:prelim}

\subsection{Quantum Hitchin Hamiltonians and the Gaudin System}

We will now review the relevant ingredients from the analytic Langlands program. Our discussion follows the foundational work in \cite{EFK1, EFK3, EFK2}.

The starting point consists of a reductive algebraic group $G$ and an smooth irreducible projective curve $X$ over a local field $F$ with a specified zero-dimensional subvariety $S \subset X$. Let $\mathrm{Bun}_G(X, S)$ be the moduli stack of $G$-bundles over $X$ with a specified reduction to a Borel subgroup $B \subset G$ at the subvariety $S \subset X$, and let $\Bun_G^{\circ}(X, S)$ be the substack of such \textit{stable} $G$-bundles (defined precisely in Definition 2.1 of \cite{EFK3}). 

If $\abs{S}$ is sufficiently large, then $\Bun_G^{\circ}(X, S)$ is open and dense in $\mathrm{Bun}_G(X, S)$. If we take $G$ to be simple and simply-connected, and if $X$ is a curve of genus $g$, then $\Bun_G^\circ(X, S)$ can be viewed as a smooth quasiprojective variety of dimension $3g-3 + \abs{S}$ \cite{EFK3}. We then define $V_G(X, S)$ to be the space of smooth, compactly supported sections of the line bundle of half-densities $\Omega^{1/2}_{\mathrm{Bun}}$ over $\mathrm{Bun}^\circ_G(X, S)$. $V_G(X, S)$ admits an inner product: 
\[ \ex{v, w} := \int_{\mathrm{Bun}_G^\circ(X, S)} v \cdot \bar{w} . \]
We then define $\mathcal{H}_G(X, S)$ to be the Hilbert space completion of $V_G(X, S)$ with respect to $\ex{\cdot, \cdot}$.

Now, take $F = \mathbb{C}$. To make contact with the geometric Langlands program, we wish to analyze the joint spectrum of a commutative algebra $\mathcal{A}$ of \textit{global} differential operators acting on $\Omega^{1/2}_{\mathrm{Bun}}$. Specifically, take $K_{\mathrm{Bun}}$ to be the canonical line bundle on $\mathrm{Bun}^\circ_G(X, S)$. Now, it is known that $K_{\mathrm{Bun}}$ has a unique square root $K^{1/2}_{\mathrm{Bun}}$ up to isomorphism. The commutative algebra $D_G$ of \textit{global} holomorphic differential operators on $K^{1/2}_{\mathrm{Bun}}$ is generated by the \textit{quantum Hitchin Hamiltonians}, and their properties were first described in \cite{Teschner2017}. We may just as well consider the antiholomorphic operators $\bar{D_G}$ acting on the conjugate $\bar{K}^{1/2}_{\mathrm{Bun}}$ of $K^{1/2}_{\mathrm{Bun}}$. Following \cite{EFK1}, we define the commutative algebra $\mathcal{A}$ as follows: 
\[ \mathcal{A} = D_G \otimes_{\mathbb{C}} \bar{D_G}. \]
To understand the joint spectrum of $\mathcal{A}$, we wish to study the action of holomorphic differential operators on half-densities in $\Omega^{1/2}_{\mathrm{Bun}} = K^{1/2}_{\mathrm{Bun}} \otimes \bar{K}^{1/2}_{\mathrm{Bun}}$.
To establish (over $F = \mathbb{C}$) the analytic Langlands correspondence, it is thus essential to understand the joint spectra of the quantum Hitchin Hamiltonians. For general $G$ and $X$, this is quite a difficult problem; however, some progress has been made for the simplest non-abelian $G$. In \cite{EFK3, Gaiotto2021}, the specific case of $G = PGL_2$ over $X = \mathbb{P}^1$ is considered, and this is the case we shall focus on as well. In genus zero, we require $\abs{S} \geq 3$ parabolic points for the moduli stack $\mathrm{Bun}_G^\circ(X, S)$ of stable $G$-bundles to be open and dense in $\mathrm{Bun}_G(X, S)$, and for $\abs{S} = 3$, this moduli stack consists of a single point. Thus, we henceforth take
\[ S = \{t_0, \cdots, t_{m+1}\} \] 
where $m \geq 2$. Specifying a Borel reduction of $G$ at $S$ then amounts to giving each point $t_i$ a \textit{parabolic structure}, which is simply a choice of direction $y_i \in \mathbb{P}^1$. In fact, $\mathrm{Bun}_G^\circ(X, S)$ is parametrized precisely by the $y_i$. We have the following birational equivalence \cite{EFK3}:
\[ \mathrm{Bun}_G^\circ(X, S) \  \mathop{\longrightarrow}^{\sim} \ \mathbb{P}^{m-1}.\]
The fact that $\mathrm{Bun}_G^\circ(X, S)$ has dimension $(m-1)$ can intuitively be seen by the fact that we may translate by $PGL_2$ to fix three of the parabolic structures at the marked points. 

Given a choice of $y_i$, the (holomorphic) quantum Hitchin Hamiltonians reduce to the so-called \textit{Gaudin operators} \cite{Teschner2017}:
\[ G_i = \sum_{j \neq i} \frac{1}{t_j - t_i} \left[ -(y_i - y_j)^2 \p_i \p_j + (y_i - y_j)(\p_i - \p_j) + \frac{1}{2} \right],   \]
where $\p_i \equiv \p/\p y_i$. It is easy to verify that the algebra of Gaudin operators is indeed commutative: $[G_i, G_j] = 0$. Our problem is then to understand the joint spectrum of the Gaudin system. 

\subsection{Hecke Operators}

The original number-theoretic Langlands program for curves $X$ over finite fields $\mathbb{F}_q$ studies the joint spectrum of commuting Hecke operators. The notion of Hecke operators have a corresponding analog in the analytic Langlands program, where $X$ is now a curve over some local field $F$. When $F = \mathbb{C}$ or $F = \mathbb{R}$, the Hecke operators strongly commute with the quantum Hitchin Hamiltonians, so the joint spectrum of the Hecke operators can also be seen via Langlands duality to correspond to the $\tensor[^L]{G}{}$-opers with real monodromy. Understanding the spectra of the Hecke operators therefore gives us a grip on the geometric Langlands program for non-archimedean local fields $F$, where we no longer have access to the quantum Hitchin Hamiltonians. 

A full definition of Hecke operators is given in \cite{EFK2}, but we are only concerned with the case where $X = \mathbb{P}^1$ and $G = PGL_2$, so we will restrict our attention to the case of rank-two vector bundles over $X$ with marked points $t_0, \cdots, t_{m+1}$ with parabolic structures $y_0, \cdots, y_{m+1}$. The Hecke operators form a commuting set of compact operators $H_t$ for each point $t \in X$. Taking as usual $t_{m+1} = \infty$ and further setting $y_{m+1} = \infty$, there is an explicit formula for the action of the Hecke operators (Proposition 3.9 of \cite{EFK3}): 
\[ H_t \psi(y_0, \cdots, y_m) = \Abs{ \prod_{i = 0}^m (t - t_i) }^{1/2} \int_{F} \frac{\Abs{ds}}{\prod_{i = 0}^m \Abs{s - y_i}} \psi \left(\frac{t_0 - t}{s - y_0}, \cdots, \frac{t_m - t}{s - y_m}  \right) . \]
Over $F = \mathbb{C}$, the Hecke operators strongly commute with the Gaudin system, and the joint spectrum of this commutative algebra of operators is simple \cite{EFK3}. 

\subsection{The Gaiotto Kernel}

It was Gaiotto and Witten \cite{https://doi.org/10.48550/arxiv.2107.01732} who first considered the symmetric \textit{multiplication kernel} $K_3(x, y, z; t)$ in three sets of variables intertwining the action of the Gaudin and Hecke operators over $\mathbb{C}$. More precisely, if we let $x_0, \cdots, x_m, y_0, \cdots, y_m, z_0, \cdots, z_m$ be three copies of variables, they were interested in a function $K_3(x, y, z; t)$ symmetric in $x, y, z$ satisfying
\[ G_i^x K_3(x, y, z; t) = G_i^y K_3(x, y, z; t) = G_i^z K_3(x, y, z; t); \]
\[ H_{t'}^x K_3(x, y, z; t) = H_{t'}^y K_3(x, y, z; t) = H_{t'}^z K_3(x, y, z; t). \]
Here $G_i^u$ and $H_{t'}^u$ denote the Gaudin and Hecke operators, respectively, with respect to the variables $u_0, \cdots, u_m$ for $u = x, y, z$.

The object $K_3$ is of interest to us for several reasons. First of all, it furnishes a ``multiplication kernel'' in the language of \cite{Kontsevich_2021}. That is, it defines a commutative associative product $*$ on $\mathcal{H}$. Specifically, if we define
\[ (f * g)(x) = \int_{\mathrm{Bun}_G^\circ(X, S) \times \mathrm{Bun}_G^\circ(X, S)}  K_3(x, y, z) f(y) g(z) \, \Abs{dy} \Abs{dz} , \]
then the product $*$ is commutative (by virtue of the symmetry of $K_3$ in $x, y, z$) and associative (as shown in Section 4 of \cite{Gaiotto2021}).

Further, since $K_3$ intertwines the action of the Gaudin and Hecke operators, it may be shown that integral \textit{Gaiotto operators} $K_x$ defined by integrating over the kernel $K_3$ (for arbitrary $x$) commute with the Gaudin and Hecke operators. Explicitly, for $f \in \mathcal{H}$, we find
\[ (K_x \circ G_i) f = (G_i \circ K_x) f ; \]
\[ (K_x \circ H_t) f = (H_t \circ K_x) f ; \]
Thus, the Gaiotto operators $K_x$ diagonalize in the joint eigenbasis of the Gaudin operators. An explicit computation of $K_3$ would thus yield information about the joint spectrum of the Gaudin and Hecke operators. 

In his recent work \cite{Gaiotto2021}, Gaiotto produced a simple explicit expression for the kernel $K_3$ over $F = \mathbb{C}$. He defined the matrix $A_{ij}$ with entries as follows: 
\[ A_{ij}=\frac{(x_i-x_j)(y_i-y_j)(z_i-z_j)}{t_i-t_j}, \ 0 \leq i \neq j \leq m \]
and $A_{ii} = 0$. Using arguments from conformal field theory, he gave the following formula, stated above in Theorem \ref{thm:main}:
\[ K_3(x, y, z; t) = \frac{1}{\abs{\det A}}. \]
That the Gaiotto operators commute with the Hecke operators follows from the fact that they commute with the Gaudin operators, since the joint spectrum is simple \cite{EFK3}. Gaiotto provided an explicit proof of his formula for four marked points $t_i$ and a numerical verification for five points. In Section \ref{sec:proof}, we will provide a complete proof of Theorem \ref{thm:main}. We will also provide a direct proof of the analogous intertwining formula for Hecke operators in a following paper \cite{klyuev22}.

\section{Proof of Gaiotto's Formula}
\label{sec:proof}

\subsection{Outline of Proof}

In this section, we will prove Theorem \ref{thm:main}. Inspired by the physical argument, we write $K_3$ as a formal Gaussian integral and consider the action of the holomorphic Gaudin operators by differentiating inside the integral. Care must be taken to ensure that the differentiation under the integral sign is allowed and that the representation of $K_3$ as a Gaussian integral is valid. The resulting integral can be evaluated using Wick's theorem. When the dust clears, the resulting expression is symmetric in $x, y, z$. 

\subsection{Main Proof}
\label{subsec:mainproof}

To prove Gaiotto's formula, we note that the determinant formula for $K_3$ arises in Gaiotto's chiral algebra construction as the result of a Gaussian integral over formal parameters $\lambda_i$ \cite{Gaiotto2021}. To this end, we will perform some formal calculations involving Gaussian integrals and relate these to what we wish to prove. To begin, notice that we may write
\[ K_3(x, y, z; t) = \frac{1}{\abs{\det A}} = \frac{1}{\sqrt{\det A}} \cdot \frac{1}{\sqrt{\det \bar{A}}}. \]
The holomorphic Gaudin operators care only about the determinant of the \textit{holomorphic} matrix $A$, so it suffices to represent $(\det A)^{-1/2}$ as a Gaussian integral and consider the action of the Gaudin operator on this piece. To begin, we quote two well-known elementary results for real matrices:

\begin{lem}[Gaussian Integration] \label{lem:gaussian}
Let $A$ be a real, symmetric positive-definite matrix. We have (for suitable normalization of the Lebesgue measure)
$$
\int_{\Bbb R^n} e^{-\frac{1}{2}(\bold u,A\bold u)}d^n \bold u=(\det A)^{-1/2}. 
$$
\end{lem}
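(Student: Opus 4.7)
The plan is to diagonalize $A$ by the spectral theorem and thereby reduce the multidimensional integral to a product of standard one-dimensional Gaussians.

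First, since $A$ is real and symmetric, the spectral theorem furnishes an orthogonal matrix $O$ with $O^T A O = \mathrm{diag}(\lambda_1,\ldots,\lambda_n)$, and each eigenvalue $\lambda_i$ is strictly positive by positive-definiteness. I would perform the change of variables $\mathbf{u} = O\mathbf{v}$: because $\lvert\det O\rvert = 1$ the Lebesgue measure $d^n\mathbf{u}$ is preserved, while the quadratic form becomes $(\mathbf{u}, A\mathbf{u}) = \sum_i \lambda_i v_i^2$. The integral therefore factors as
\[
\int_{\mathbb{R}^n} e^{-\frac{1}{2}(\mathbf{u}, A\mathbf{u})}\,d^n\mathbf{u} \;=\; \prod_{i=1}^n \int_{\mathbb{R}} e^{-\lambda_i v_i^2/2}\,dv_i.
\]

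Next, I would invoke the classical one-dimensional Gaussian identity $\int_{\mathbb{R}} e^{-\lambda v^2/2}\,dv = \sqrt{2\pi/\lambda}$ for $\lambda > 0$, proved in the standard way by squaring the integral and passing to polar coordinates in the plane. Multiplying across all $i$ and using $\det A = \prod_i \lambda_i$ yields
\[
\prod_{i=1}^n \sqrt{\frac{2\pi}{\lambda_i}} \;=\; \frac{(2\pi)^{n/2}}{\sqrt{\det A}}.
\]
The clause \emph{``for suitable normalization of the Lebesgue measure''} in the statement lets us absorb the overall factor $(2\pi)^{n/2}$ into the chosen measure, producing the claimed $(\det A)^{-1/2}$.

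There is essentially no obstacle here: the lemma is a textbook computation, and the only analytic input beyond elementary linear algebra is the one-dimensional Gaussian integral. It is stated purely as a black-box tool, since the genuinely interesting work lies in using it to represent $(\det A)^{-1/2}$ as a formal Gaussian integral in $\lambda_i$ and then commuting the Gaudin operators past the integral sign in the main proof of Theorem \ref{thm:main}.
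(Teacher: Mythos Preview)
Your argument is correct and is exactly the standard spectral-theorem reduction one expects; the paper itself does not actually prove this lemma but simply records it as ``classical and well-known,'' so your proposal is entirely consistent with (and more detailed than) the paper's treatment.
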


\begin{lem}[Wick's Theorem] \label{lem:wick} Let $A$ be a real, symmetric positive-definite matrix as above, and let $B,C$ be symmetric matrices. Then the following relations hold:

$$
\int_{\Bbb R^n} (B\bold u,A\bold u)e^{-\frac{1}{2}(\bold u,A\bold u)}d^n \bold u={\rm Tr}(B)(\det A)^{-1/2};
$$

$$
\int_{\Bbb R^n} (B\bold u,A\bold u)(C\bold u,A\bold u)e^{-\frac{1}{2}(\bold u,A\bold u)}d^n \bold u=({\rm Tr}(B){\rm Tr}(C)+2{\rm Tr}(BC))(\det A)^{-1/2}. 
$$
\end{lem}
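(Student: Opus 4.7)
The plan is to reduce both identities to the Wick (Isserlis) contraction formula for Gaussian moments, which itself follows from Lemma~\ref{lem:gaussian} by introducing a linear source. Specifically, the substitution $u \mapsto u + A^{-1}J$ inside Lemma~\ref{lem:gaussian} yields the generating function
\[
Z(J) := \int_{\mathbb{R}^n} e^{-\frac{1}{2}(u, A u) + (J, u)} \, d^n u = (\det A)^{-1/2}\, e^{\frac{1}{2}(J, A^{-1} J)}.
\]
Differentiating $Z$ twice (respectively, four times) in the components of $J$ and setting $J = 0$ produces the two- and four-point moment formulas
\[
\int u_i u_j\, e^{-\frac{1}{2}(u, A u)}\, d^n u = (A^{-1})_{ij} \,(\det A)^{-1/2},
\]
\[
\int u_i u_j u_k u_l\, e^{-\frac{1}{2}(u, A u)}\, d^n u = \bigl[(A^{-1})_{ij}(A^{-1})_{kl} + (A^{-1})_{ik}(A^{-1})_{jl} + (A^{-1})_{il}(A^{-1})_{jk}\bigr] (\det A)^{-1/2}.
\]

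The next step is to recognize the integrands as symmetric quadratic forms in $u$. Because $u^{T} X u = u^{T} X^{T} u$ for any matrix $X$ (the scalar equals its own transpose), one has $(Bu, Au) = u^{T} B A\, u = u^{T}\tilde{B}\, u$ with $\tilde B := \tfrac{1}{2}(BA+AB)$, which is symmetric since both $A$ and $B$ are; analogously $(Cu, Au) = u^{T}\tilde C\, u$ with $\tilde C := \tfrac{1}{2}(CA+AC)$. Expanding these quadratic forms entry by entry and contracting against the two- and four-point functions above converts the integrals of interest into
\[
\int (Bu, Au)\, e^{-\frac{1}{2}(u, Au)}\, d^n u = \operatorname{Tr}(\tilde B A^{-1})\, (\det A)^{-1/2},
\]
\[
\int (Bu, Au)(Cu, Au)\, e^{-\frac{1}{2}(u, Au)}\, d^n u = \bigl[\operatorname{Tr}(\tilde B A^{-1}) \operatorname{Tr}(\tilde C A^{-1}) + 2\operatorname{Tr}(\tilde B A^{-1}\tilde C A^{-1})\bigr](\det A)^{-1/2}.
\]

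The proof then collapses to the two trace identities $\operatorname{Tr}(\tilde B A^{-1}) = \operatorname{Tr}(B)$ and $\operatorname{Tr}(\tilde B A^{-1} \tilde C A^{-1}) = \operatorname{Tr}(BC)$. The first is immediate: $\tilde B A^{-1} = \tfrac12(B + ABA^{-1})$, and cyclicity of the trace gives $\operatorname{Tr}(ABA^{-1}) = \operatorname{Tr}(B)$, so both summands contribute $\tfrac12\operatorname{Tr}(B)$. The main obstacle I anticipate is the second identity: expanding $\tilde B A^{-1}\tilde C A^{-1}$ produces the four matrix products $BC$, $BACA^{-1}$, $ABA^{-1}C$, and $ABCA^{-1}$, and reorganizing their traces via cyclicity and the symmetry of $A$, $B$, $C$ so that the sum collapses cleanly onto $\operatorname{Tr}(BC)$ is the delicate algebraic core of the lemma. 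This is the step where the bulk of the bookkeeping lives and where the symmetry hypotheses on all three matrices must be used in an essential way; once this trace manipulation is settled, both formulas drop out immediately.
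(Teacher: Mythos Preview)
The paper offers no proof beyond ``these results are classical and well-known,'' so there is nothing to compare against; your generating-function-and-contraction approach is exactly the standard route to Wick identities of this type, and your derivation of the two- and four-point moments and of the first trace identity $\operatorname{Tr}(\tilde B A^{-1})=\operatorname{Tr}(B)$ is correct.

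There is, however, a genuine gap. The second trace identity you flag as ``the delicate algebraic core,'' namely $\operatorname{Tr}(\tilde B A^{-1}\tilde C A^{-1})=\operatorname{Tr}(BC)$, is \emph{false} for general symmetric $A,B,C$. Take $n=2$, $A=\operatorname{diag}(1,2)$, $B=C=\bigl(\begin{smallmatrix}0&1\\1&0\end{smallmatrix}\bigr)$. Then $\tilde B=\bigl(\begin{smallmatrix}0&3/2\\3/2&0\end{smallmatrix}\bigr)$, $\tilde B A^{-1}=\bigl(\begin{smallmatrix}0&3/4\\3/2&0\end{smallmatrix}\bigr)$, and $\operatorname{Tr}\bigl((\tilde B A^{-1})^2\bigr)=9/4\neq 2=\operatorname{Tr}(BC)$. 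Correspondingly $(Bu,Au)=3u_1u_2$, so the integral equals $9\langle u_1^2u_2^2\rangle(\det A)^{-1/2}=\tfrac{9}{2}(\det A)^{-1/2}$, whereas the stated right-hand side is $4(\det A)^{-1/2}$. The problem is not your argument but the lemma's hypothesis: carrying your own contraction through without symmetrizing, the three Wick pairings yield $\operatorname{Tr}(B)\operatorname{Tr}(C)+\operatorname{Tr}(BC)+\operatorname{Tr}(B^{T}ACA^{-1})$, and the last term collapses to $\operatorname{Tr}(BC)$ exactly when $AB$ is symmetric (i.e.\ $B^{T}A=AB$), \emph{not} when $B$ itself is. That is precisely how the lemma is actually used downstream in Lemma~\ref{LemGaudinOnA}, where the role of $B$ is played by matrices of the form $D_{rm}A^{-1}$ with $AB=D_{rm}$ symmetric. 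Under the corrected hypothesis ``$AB,AC$ symmetric'' your method finishes instantly: $(Bu,Au)=u^{T}ABu$, so the two contractions give $\operatorname{Tr}(ABA^{-1})=\operatorname{Tr}(B)$ and $\operatorname{Tr}(AB\,A^{-1}\,AC\,A^{-1})=\operatorname{Tr}(ABCA^{-1})=\operatorname{Tr}(BC)$ with no bookkeeping at all.
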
 

\begin{proof}
These results are classical and well-known.
\end{proof}

Now, $x_i,y_i,z_i,t_i$, $1\le i\le n$ be real variables, and define a symmetric matrix $A$ by 
$$
A_{ij}=\frac{(x_i-x_j)(y_i-y_j)(z_i-z_j)}{t_i-t_j}, \ i \neq j
s$$
and $A_{ii} = 0$, as in Gaiotto's formula. The diagonal entries of $a_{ij}$ are defined to be zero. This is obviously a real, symmetric matrix, but it is not necessarily positive-definite. Nevertheless, we may still use the Gaussian integral representation and Lemma \ref{LemDiffIntAbsolute} to compute the action of the Gaudin operators $G_r^y$ on $(\det A)^{-1/2}$: 

\begin{lem} \label{LemGaudinOnA}
Let the matrix $A$ and the Gaudin operators $G_r^y$ be defined as above. The following formula holds: 
\begin{multline}
    \label{EqGaudinTraces}
4G_r^y (\det A)^{-1/2} = \sum_{s\ne r}\tfrac{y_r-y_s}{t_r-t_s}\sum_{m \neq r} z_m \bigg[-\sum_{p \neq s} \Big( (y_r-y_s)z_p({\rm Tr}(D_{rm}A^{-1}){\rm Tr}(D_{sp}A^{-1})+\\
\left.2{\rm Tr}(D_{rm}A^{-1}D_{sp}A^{-1})\right.) \Big) -2{\rm Tr}((D_{rm}-D_{sm})A^{-1})\bigg].
\end{multline}

\end{lem}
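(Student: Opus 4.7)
The plan is to follow the strategy sketched in Section 3.1: write $(\det A)^{-1/2}$ as a formal Gaussian integral, differentiate under the integral sign to apply $G_r^y$, and evaluate the resulting Gaussian integrals by Wick's theorem (Lemma \ref{lem:wick}). Since $A$ is symmetric but not in general positive-definite, the Gaussian representation is formal, and the legitimacy of differentiating under the integral is the content of the auxiliary lemma \ref{LemDiffIntAbsolute} referenced in the proof; equivalently one can proceed directly via the calculus identity $\partial_r \log\det A = {\rm Tr}(A^{-1}\partial_r A)$, which is exactly what Wick's theorem encodes on the Gaussian side.

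The crucial structural observation is that $A$ is linear in each variable $y_i$, so $\partial_r\partial_s A = 0$. Consequently,
\[ \partial_r(\det A)^{-1/2} = -\tfrac{1}{2}(\det A)^{-1/2}\,{\rm Tr}(A^{-1}\partial_r A), \]
\[ \partial_r\partial_s(\det A)^{-1/2} = \tfrac{1}{4}(\det A)^{-1/2}\bigl[{\rm Tr}(A^{-1}\partial_r A)\,{\rm Tr}(A^{-1}\partial_s A) + 2\,{\rm Tr}(A^{-1}\partial_r A\cdot A^{-1}\partial_s A)\bigr], \]
which are precisely the identities produced by Lemma \ref{lem:wick} applied with $B=\partial_r A$ and $C=\partial_s A$. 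Substituting these into the three pieces of $G_r^y$ yields: from the second-order term $-(y_r-y_s)^2 \partial_r\partial_s$, the two-trace and product-trace contributions; from the first-order term $(y_r-y_s)(\partial_r-\partial_s)$, the single-trace difference ${\rm Tr}(A^{-1}(\partial_r A - \partial_s A))$; and from the constant $1/2$ term, a residual to be absorbed. Multiplying through by $4$ and using $1/(t_s-t_r)=-1/(t_r-t_s)$ produces the outer prefactor $\tfrac{y_r-y_s}{t_r-t_s}$ on the right-hand side of \eqref{EqGaudinTraces}.

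The final step is to expand $\partial_r A$ in terms of the matrices $D_{rm}$ using the identity $z_i - z_j = \sum_m z_m(\delta_{mi}-\delta_{mj})$, which gives $\partial_r A = \sum_m z_m D_{rm}$ with $(D_{rm})_{ij} = \tfrac{x_i-x_j}{t_i-t_j}(\delta_{ri}-\delta_{rj})(\delta_{mi}-\delta_{mj})$. Because this sum vanishes whenever $z$ is constant, one obtains the identity $\sum_m D_{rm}=0$, which lets one restrict the $m$- and $p$-summations to $m\ne r$ and $p\ne s$ respectively, using $D_{rr}=-\sum_{m\ne r}D_{rm}$ (and the analogous relation for $s$) to absorb the boundary terms. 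Plugging these decompositions into the product and single traces and collecting terms then yields exactly \eqref{EqGaudinTraces}.

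The main obstacle is not conceptual but organizational: one must carefully track the $1/2$ and $1/4$ prefactors, the sign flips from $1/(t_s-t_r)=-1/(t_r-t_s)$, and most importantly arrange the constant $1/2$-contribution of the Gaudin operator together with the boundary ($m=r$, $p=s$) terms so that, via $\sum_m D_{rm}=0$, they recombine into the restricted sums $\sum_{m\ne r}$ and $\sum_{p\ne s}$ in the statement. The only genuinely conceptual step is the justification of the formal Gaussian integral representation and differentiation under the integral, which is precisely the content of the cited absolute-integration lemma.
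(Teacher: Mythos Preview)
Your approach is essentially the same as the paper's: represent $(\det A)^{-1/2}$ as a Gaussian integral, differentiate under the integral sign, and apply Wick's theorem (Lemma~\ref{lem:wick}). Your observation that one can bypass the Gaussian integral entirely and use Jacobi's formula $\partial_r\log\det A={\rm Tr}(A^{-1}\partial_r A)$ directly is a genuine simplification over the paper: the paper invokes Lemmas~\ref{lem:gaussian}--\ref{lem:wick} for positive-definite $A$ and then argues that the resulting \emph{algebraic} identity extends to all $A$, whereas your direct route needs no such detour and no appeal to Lemma~\ref{LemDiffIntAbsolute} at all. The identities you write for $\partial_r(\det A)^{-1/2}$ and $\partial_r\partial_s(\det A)^{-1/2}$ (using $\partial_r\partial_s A=0$) are exactly what Wick's theorem encodes, so the two routes agree on the nose.

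There is, however, a genuine slip in your handling of the constant term. The paper does \emph{not} absorb the $\tfrac12$ piece of $G_r$ into boundary terms; it simply drops it, since $\sum_{s\ne r}\tfrac{1}{2(t_r-t_s)}$ acts on $(\det A)^{-1/2}$ by scalar multiplication and is manifestly symmetric in $x,y,z$ (the paper states this explicitly: ``where we neglect an overall constant''). In particular, this constant produces no $D_{rm}$-contribution whatsoever, so it cannot recombine with the $m=r$ or $p=s$ terms as you suggest. Relatedly, your mechanism for obtaining the restricted sums $\sum_{m\ne r}$, $\sum_{p\ne s}$ is not right: using $\sum_m D_{rm}=0$ to eliminate $m=r$ would replace $\sum_m z_m D_{rm}$ by $\sum_{m\ne r}(z_m-z_r)D_{rm}$, not by $\sum_{m\ne r}z_m D_{rm}$. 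In fact, the paper's own derivation produces the \emph{unrestricted} sums $\sum_m,\sum_p$, and the diagonal cases $m=r$, $p=s$ are dealt with only later, in the proof of Lemma~\ref{LemOmega}, via the identity $D_{rr}=-\sum_{m\ne r}D_{rm}$. So the correct picture is: drop the constant, derive the unrestricted-sum formula, and defer the diagonal bookkeeping to the next lemma.
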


\begin{proof}
Let $E_{ij}$ be the elementary matrix with a single $1$ at the $i, j$ position, we may write 
\begin{align} 
\begin{split}
    D_{km} &= -\frac{x_k-x_m}{t_k-t_m}(E_{km}+E_{mk}), \ \ k \neq m; \\ \label{EqDefD}
    D_{kk} &= \sum_{m\ne k}\frac{x_k-x_m}{t_k-t_m}(E_{km}+E_{mk}), 
\end{split}
\end{align}
and in this case, 
\begin{equation}
    A=\sum_{k,m=1}^n y_kz_mD_{km} \label{eq:defA}
\end{equation}
Now, \textit{define} $A$ by \eqref{eq:defA}, leaving the $D_{km}$ to be arbitrary matrices. If $A$ is positive-definite, then Lemma \ref{lem:gaussian} tells us that
\[ 
(\det A)^{-1/2}=\int_{\Bbb R^n}e^{-\frac{1}{2}(\bold u,A\bold u)}d^n \bold u=
\int_{\Bbb R^n}e^{-\frac{1}{2}\sum_{k,m} y_kz_m(\bold u,D_{km}\bold u)}d^n \bold u \]

Recall that the Gaudin operators have the form 
$$
G_r=\sum_{s\ne r}\frac{1}{t_r-t_s} \left[ -(y_r-y_s)^2\partial_r\partial_s+(y_r-y_s)(\partial_r-\partial_s) \right]
$$
where we neglect an overall constant (which acts manifestly symmetrically on the variables $x, y, z$).

By Lemmas \ref{LemDiffIntAbsolute} and \ref{LemExponentIsGood}, we may safely consider the action of the Gaudin operator under the integral sign. Differentiating under the integral sign, we find
\begin{multline*}
4G_r^y (\det A)^{-1/2}=
\int_{\Bbb R^n} d^n \bold u \ e^{-\frac{1}{2}(\bold u,A\bold u)}\Bigg[\sum_{s\ne r}\tfrac{y_r-y_s}{t_r-t_s} \times  \\ \sum_m z_m \bigg(-\sum_{p}(y_r-y_s)z_p(D_{rm}\bold u,\bold u)(D_{sp}\bold u,\bold u)-
 2((D_{rm}-D_{sm})\bold u,\bold u) \bigg)\Bigg]
\end{multline*}
An application of Lemma \ref{lem:wick} therefore yields
\begin{multline*}
4G_r^y (\det A)^{-1/2} = \sum_{s\ne r}\tfrac{y_r-y_s}{t_r-t_s}\sum_m z_m (-\sum_{p}(y_r-y_s)z_p\left({\rm Tr}(D_{rm}A^{-1}){\rm Tr}(D_{sp}A^{-1})+\right.\\
\left.2{\rm Tr}(D_{rm}A^{-1}D_{sp}A^{-1})\right.)-2{\rm Tr}((D_{rm}-D_{sm})A^{-1})).
\end{multline*}

This is a purely algebraic expression in matrices $A$. Since it holds over all positive-definite matrices $A$, the equality holds for \textit{all} matrices $A$. In particular, we may take the $D_{km}$ as in Eqn. \ref{EqDefD}. This completes the proof. 
\end{proof}

Now, define matrix elements $b_{ij}$ as follows: 
\[ b_{ij}:=(A^{-1})_{ij}, \]
so that
\begin{equation}
\label{EqBInverseToA}
\sum_{k\ne i}\frac{(y_i-y_k)(z_i-z_k)(x_i-x_k)}{(t_i-t_k)}b_{kj}=
\delta_{ij}.
\end{equation} 
We may now massage \eqref{EqGaudinTraces} into a more convenient form.

\begin{lem} \label{LemOmega}
The following formula holds: 
    $$
(\det A)^{1/2}G_r^y (\det A)^{-1/2} := \Omega_r := \sum_{s\ne r}\frac{\Omega_{rs}}{t_r-t_s}, 
$$
where $\Omega_{rs}$ is defined by 
$$
\Omega_{rs}=\sum_{m\ne r,p\ne s}\tfrac{(y_r-y_s)^2(z_r-z_m)(z_p-z_s)(x_r-x_m)(x_s-x_p)}{(t_r-t_m)(t_s-t_p)}
(b_{mr}b_{sp}+b_{ms}b_{rp}+b_{mp}b_{sr})
$$
$$
-\sum_{m\ne r}\tfrac{(x_r-x_m)(y_r-y_s)(z_r-z_m)}{t_r-t_m}b_{rm}+\sum_{s\ne p}\tfrac{(x_s-x_p)(y_r-y_s)(z_s-z_p)}{t_s-t_p}b_{sp}.
$$
\end{lem}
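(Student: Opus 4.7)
The plan is to start from the trace formula of Lemma \ref{LemGaudinOnA} and evaluate each trace in terms of the matrix elements $b_{ij} = (A^{-1})_{ij}$, then rearrange the result to match the stated form of $\Omega_{rs}$. Since $(\det A)^{-1/2}$ is a common factor on both sides of the claimed equality $(\det A)^{1/2} G_r^y (\det A)^{-1/2} = \Omega_r$, the statement reduces to a purely algebraic identity in $x_i, y_i, z_i, t_i$ and $b_{ij}$.

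The computation splits naturally into two kinds of traces. First, for $k \neq m$, the matrix $D_{km}$ has only two nonzero entries, at positions $(k,m)$ and $(m,k)$; symmetry $b_{ij} = b_{ji}$ of $A^{-1}$ (inherited from $A$) immediately yields
\[
\Tr(D_{km} A^{-1}) = -\frac{2(x_k - x_m)}{t_k - t_m} b_{km}.
\]
Second, for the product trace with $r \neq m$ and $s \neq p$, the identity $\Tr(E_{ab} A^{-1} E_{cd} A^{-1}) = b_{bc} b_{da}$ lets one expand $(E_{rm}+E_{mr}) A^{-1} (E_{sp}+E_{ps}) A^{-1}$ into four terms and combine via symmetry to obtain
\[
\Tr(D_{rm} A^{-1} D_{sp} A^{-1}) = \frac{2(x_r-x_m)(x_s-x_p)}{(t_r-t_m)(t_s-t_p)}(b_{ms} b_{pr} + b_{mp} b_{sr}).
\]
Adding $\Tr(D_{rm}A^{-1})\Tr(D_{sp}A^{-1})$ to twice this product trace produces a common prefactor $\tfrac{4(x_r-x_m)(x_s-x_p)}{(t_r-t_m)(t_s-t_p)}$ multiplying the symmetric three-term expression $b_{rm}b_{sp} + b_{ms}b_{pr} + b_{mp}b_{sr}$, which is precisely the pairing on the first line of $\Omega_{rs}$ after invoking $b_{rm}=b_{mr}$ and $b_{pr}=b_{rp}$.

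The more delicate step is the single-trace piece $-2\Tr((D_{rm}-D_{sm})A^{-1})$. The matrix $D_{sm}$ is off-diagonal for $m \neq s$ but coincides with the \emph{diagonal} matrix $D_{ss}$ when $m=s$. Using $D_{kk} = -\sum_{q \neq k} D_{kq}$ from \eqref{EqDefD}, the unrestricted sums $\sum_m z_m \Tr(D_{rm}A^{-1})$ and $\sum_m z_m \Tr(D_{sm}A^{-1})$ (as they arise from differentiating the Gaussian weight under the integral sign) telescope into sums over $m \neq r$ and $m \neq s$ respectively, with $z_m$ replaced by $z_m-z_r$ and $z_m-z_s$. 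Combined with the outer factor $(y_r - y_s)/(t_r - t_s)$, these produce exactly the second and third summands of $\Omega_{rs}$; the same device applied to the quadratic-trace piece converts the factor $z_m z_p$ into $(z_r - z_m)(z_p - z_s)$, supplying the remaining ingredient on the first line.

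The main obstacle is bookkeeping: tracking signs, the factors of two arising from symmetrization of the elementary matrices $E_{km}+E_{mk}$, and the systematic conversion between the unrestricted index sums naturally produced by Wick's theorem and the restricted sums with difference factors appearing in $\Omega_{rs}$. The central technical device throughout is the relation $D_{kk} = -\sum_{m\ne k} D_{km}$, used both to symmetrize the quadratic-trace factors into $(z_r-z_m)(z_p-z_s)$ and to produce the correct difference factors in the linear-trace contributions. Once these conversions have been carried out cleanly, the three resulting pieces match the three summands of $\Omega_{rs}$ term by term, establishing the lemma.
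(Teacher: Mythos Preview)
Your proposal is correct and follows essentially the same approach as the paper's proof: compute the single and product traces of the $D_{km}A^{-1}$ explicitly in terms of the $b_{ij}$, then use the relation $D_{kk}=-\sum_{m\ne k}D_{km}$ to convert the unrestricted $z_m,z_p$ sums into the difference factors $(z_r-z_m)(z_p-z_s)$ and $(z_r-z_m),(z_s-z_p)$ appearing in $\Omega_{rs}$. One small terminological slip: $D_{ss}$ is not a diagonal matrix (it has entries only in row and column $s$, all off the diagonal), but since you correctly invoke $D_{kk}=-\sum_{q\ne k}D_{kq}$ this does not affect the argument.
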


\begin{proof}
We have 
\[D_{sp}A^{-1}=-\frac{x_s-x_p}{t_s-t_p}(E_{sp}+E_{ps})A^{-1},\] 
so the $(i,j)$-th element of this matrix is 
\[ (D_{sp}A^{-1})_{ij} = -\frac{x_s-x_p}{t_s-t_p}(\delta_{is}b_{pj}+\delta_{ip}b_{sj}).\]
Since $D_{ss}=-\sum_{p\ne s}D_{sp}$, we find 
\begin{align}
\begin{split}
    {\rm Tr}(D_{sp}A^{-1}) &= -2\frac{x_i-x_p}{t_i-t_p}b_{sp}, \ \ s\ne p; \\
    {\rm Tr}(D_{ss}A^{-1}) &= 2\sum_{p\ne s}\frac{x_i-x_p}{t_i-t_p}b_{sp}.
\end{split}
\end{align}
Note now that we may also write
\[ D_{sp}A^{-1} = -\frac{x_s-x_p}{t_s-t_p}\sum_j (E_{sj}b_{pj}+E_{pj}b_{sj}) . \]
When $r\ne m,s\ne p$ we have 
\begin{multline*}
    D_{rm}A^{-1}D_{sp}A^{-1}=\frac{(x_r-x_m)(x_s-x_p)}{(t_r-t_m)(t_s-t_p)}\sum_{j,k} (E_{sj}b_{pj}+E_{pj}b_{sj})(E_{rk}b_{mk}+E_{mk}b_{rk})=\\
    \frac{(x_r-x_m)(x_s-x_p)}{(t_r-t_m)(t_s-t_p)}\sum_{j,k}\delta_{jr}(E_{sk}b_{pj}b_{mk}+E_{pk}b_{sj}b_{mk})+\delta_{jm}(E_{sk}b_{pj}b_{rk}+E_{pk}b_{sj}b_{rk})=\\
    \frac{(x_r-x_m)(x_s-x_p)}{(t_r-t_m)(t_s-t_p)}\sum_k(E_{sk}(b_{pr}b_{mk}+b_{pm}b_{rk})+E_{pk}(b_{sr}b_{mk}+b_{sm}b_{rk}))
\end{multline*}
We conclude that
$$
{\rm Tr}(D_{rm}A^{-1}D_{sp}A^{-1})=2F_{rmsp}, \ \ r\neq m, \ \ s \neq p
$$
where 
$$
F_{rmsp}=\frac{(x_r-x_m)(x_s-x_p)}{(t_r-t_m)(t_s-t_p)}(b_{ms}b_{pr}+b_{mp}b_{sr}).
$$
In the case when $r=m$ we use $D_{rr}=-\sum_{m\ne r}D_{rm}$ to get the same formula with
$$
F_{rrsp}=-\sum_{m\ne r}\frac{(x_r-x_m)(x_s-x_p)}{(t_r-t_m)(t_s-t_p)}(b_{ms}b_{pr}+b_{mp}b_{sr}).
$$
Similarly, if $p=s$, we get
$$
F_{rmss}=-\sum_{p\ne s}\frac{(x_r-x_m)(x_s-x_p)}{(t_r-t_m)(t_s-t_p)}(b_{ms}b_{pr}+b_{mp}b_{sr}).
$$
Finally,
$$
F_{rrss}=\sum_{m\ne r,p\ne s}\frac{(x_r-x_m)(x_s-x_p)}{(t_r-t_m)(t_s-t_p)}(b_{ms}b_{pr}+b_{mp}b_{sr}).
$$
This allows us to write the following:
\begin{multline}
\label{EqTracesComputed}
{\rm Tr}(D_{rm}A^{-1}){\rm Tr}(D_{sp}A^{-1})+2{\rm Tr}(D_{rm}A^{-1}D_{sp}A^{-1})= \\ 4\frac{(x_r-x_m)(x_s-x_p)}{(t_r-t_m)(t_s-t_p)}(b_{rm}b_{sp}+b_{ms}b_{pr}+b_{mp}b_{sr})
\end{multline}
when $m\ne r$, $p\ne s$.

Let $m\ne r$, $p\ne s$. We have four contributions to the 
\[\frac{(x_r-x_m)(x_s-x_p)}{(t_r-t_m)(t_s-t_p)}(b_{rm}b_{sp}+b_{ms}b_{pr}+b_{mp}b_{sr})\] 
term when we combine~\eqref{EqGaudinTraces} and~\eqref{EqTracesComputed}: $-z_mz_p$ as above, $z_mz_s$ when $p=s$, $z_rz_p$ when $m=r$ and $-z_rz_s$ when $p=s$ and $r=m$. Overall, this gives $(y_r-y_s)^2(z_r-z_m)(z_p-z_s)$.

We similarly compute degree $1$ pieces:

\begin{multline*}-\tfrac12\sum_m z_m \Tr(D_{rm}-D_{sm})A^{-1}=\sum_{m\ne r}z_m\frac{x_r-x_m}{t_r-t_m}b_{rm}-\sum_{m\ne r} z_r\frac{x_r-x_m}{t_r-t_m}b_{rm}-\\
\sum_{m\ne s}z_m\frac{x_s-x_m}{t_s-t_m}b_{sm}+\sum_{m\ne s}z_s\frac{x_s-x_m}{t_s-t_m}b_{sm}=\\
-\sum_{m \ne r}\frac{(z_r-z_m)(x_r-x_m)}{t_r-t_m}b_{rm}+\sum_{m\ne s}\frac{(z_s-z_m)(x_s-x_m)}{t_s-t_m}b_{sm}
\end{multline*}

Combining the above equations with~\eqref{EqGaudinTraces} we get that

$$
(\det A)^{1/2}G_r^y (\det A)^{-1/2} := \Omega_r := \sum_{s\ne r}\frac{\Omega_{rs}}{t_r-t_s}, 
$$
where $\Omega_{rs}$ is defined by 
$$
\Omega_{rs}=\sum_{m\ne r,p\ne s}\tfrac{(y_r-y_s)^2(z_r-z_m)(z_p-z_s)(x_r-x_m)(x_s-x_p)}{(t_r-t_m)(t_s-t_p)}
(b_{mr}b_{sp}+b_{ms}b_{rp}+b_{mp}b_{sr})
$$
$$
-\sum_{m\ne r}\tfrac{(x_r-x_m)(y_r-y_s)(z_r-z_m)}{t_r-t_m}b_{rm}+\sum_{s\ne p}\tfrac{(x_s-x_p)(y_r-y_s)(z_s-z_p)}{t_s-t_p}b_{sp}.
$$
This completes the proof.
\end{proof}

Now, decompose $\Omega_{r} = \Omega_{r}^{(1)} + \Omega_{r}^{(2)}$, where $\Omega_{r}^{(i)}$ contains terms of degree $i$ in the symbols $b_{mk}$. In particular, 
\[ \Omega_r^{(2)} = \sum_{m\ne r,p\ne s}\tfrac{(y_r-y_s)^2(z_r-z_m)(z_p-z_s)(x_r-x_m)(x_s-x_p)}{(t_r-t_m)(t_s-t_p)}
(b_{mr}b_{sp}+b_{ms}b_{rp}+b_{mp}b_{sr});  \]
\[ \Omega_r^{(1)} = -\sum_{m\ne r}\tfrac{(x_r-x_m)(y_r-y_s)(z_r-z_m)}{t_r-t_m}b_{rm}+\sum_{s\ne p}\tfrac{(x_s-x_p)(y_r-y_s)(z_s-z_p)}{t_s-t_p}b_{sp}. \]
Our strategy will be to rearrange $\Omega_{r}^{(2)}$ until we obtain an expression which can be shown to be symmetric in $y$ and $z$. Along the way, we will obtain additional degree-1 terms, which we will denote by $\Sigma_{r}^{(1)}$. Finally, we will demonstrate that $\Omega_{r}^{(1)} + \Sigma_{r}^{(1)}$ is symmetric in $y, z$. 

We begin by noticing that
\[(y_r-y_s)^2=(y_r-y_s)(y_r-y_p)+(y_r-y_s)(y_p-y_s).\] 
Moreover, we find that
\[ \sum_{p \neq s} \frac{(y_p-y_s)(z_p-z_s)(x_s-x_p)}{t_s-t_p}(b_{mr}b_{sp}+b_{ms}b_{rp}+b_{mp}b_{sr}) = b_{rm} + \delta_{sr} b_{ms} + \delta_{ms} b_{rs} . \] 
We then see that 
\[ \sum_{m\ne r,p\ne s}\frac{(y_r-y_s)(y_p - y_s)(z_r-z_m)(z_p-z_s)(x_r-x_m)(x_s-x_p)}{(t_r-t_m)(t_s-t_p)}
(b_{mr}b_{sp}+b_{ms}b_{rp}+b_{mp}b_{sr}) =  \]
\[ \sum_{m, s \neq r} \frac{(x_r - x_m)(y_r - y_s)(z_r - z_m)}{(t_r - t_m)(t_r - t_s)} b_{rm} + \cdots := \Sigma_{r}^{(1), 1},  \]
where the terms $\cdots$ are manifestly symmetric in $y, z$. 


The remaining degree-two piece looks as follows: 
\begin{multline*} {\Omega'}_r^{(2)} = \sum_{s \neq r} \sum_{m \neq r, p \neq r, s} (y_r-y_s)(y_r-y_p)(z_r-z_m)(z_p-z_s)(x_r-x_m)(x_s-x_p) \\
\times \frac{1}{(t_r-t_m)(t_s-t_p)(t_r-t_s)} (b_{mr}b_{sp}+b_{ms}b_{rp}+b_{mp}b_{sr}), 
\end{multline*}
so that 
\[ \Omega_r^{(2)} = {\Omega'}_r^{(2)} + \Sigma_{r}^{(1), 1}. \] 
Note that the $y_r-y_p$ term allows us to add a condition $p\neq r$ without changing the sum. Hence the summation is symmetric with respect to the exchange of summation indices $s\leftrightarrow p$. Thus, we may symmetrize with respect to change $s\leftrightarrow p$. Note that the numerator is symmetric with respect to $s\leftrightarrow p$. The denominator transforms as follows:
\[\frac{1}{(t_r-t_m)(t_s-t_p)(t_r-t_s)}  \implies \]
\[  \frac{1}{2} \left[ \frac{1}{(t_r-t_m)(t_s-t_p)(t_r-t_s)}+\frac{1}{(t_r-t_m)(t_p-t_s)(t_r-t_p)} \right]\]
\[ = \frac{1}{2} \frac{1}{(t_r-t_m)(t_s-t_p)}\left[ \frac{1}{t_r-t_s}-\frac{1}{t_r-t_p} \right]= \frac{1}{2}\frac{1}{(t_r-t_m)(t_r-t_s)(t_r-t_p)} . \]

Now, we write 
\[y_r-y_p=(y_r-y_m)+(y_m-y_p).\]
and we notice that
\[ \sum_{m \neq r} \frac{(y_r-y_m)(z_r-z_m)(z_r-x_m)}{t_r-t_m}(b_{mr}b_{sp}+b_{ms}b_{rp}+b_{mp}b_{sr}) = b_{sp} + \delta_{rs} b_{rp} + \delta_{rp} b_{rs} . \]
We see then that 
\begin{multline*} \frac{1}{2} \sum_{s \neq r} \sum_{m \neq r, p \neq r, s} (y_r-y_s)(y_r-y_m)(z_r-z_m)(z_p-z_s)(x_r-x_m)(x_s-x_p) \\
\times \frac{1}{(t_r-t_m)(t_r-t_p)(t_r-t_s)} (b_{mr}b_{sp}+b_{ms}b_{rp}+b_{mp}b_{sr})
\end{multline*}
\[ = - \frac{1}{2} \sum_{p, s \neq r} \frac{(x_s - x_p)(y_r - y_s)(z_s - z_p)}{(t_r - t_p)(t_r - t_s)} b_{sp} + \cdots :=  \Sigma_{r}^{(1), 2}  , \]
where the $\cdots$ are terms manifestly symmetric in $y, z$. 

The degree-two term that we are left with is then 
\begin{multline*} {\Omega''}_{r}^{(2)} =  \frac{1}{2} \sum_{m,s,p \neq r}(y_r-y_s)(y_m-y_p)(z_r-z_m)(z_p-z_s)(x_r-x_m)(x_s-x_p)\\
\frac{1}{(t_r-t_m)(t_r-t_s)(t_r-t_p)} (b_{mr}b_{sp}+b_{ms}b_{rp}+b_{mp}b_{sr}) 
\end{multline*}
\[ :=  \frac{1}{2} \sum_{m,s,p \neq r}A_{rmsp}(b_{mr}b_{sp}+b_{ms}b_{rp}+b_{mp}b_{sr}).\]
so that
\[ \Omega_r^{(2)} = {\Omega'}_r^{(2)} + \Sigma_r^{(1), 1} = {\Omega''}_r^{(2)} + \Sigma_r^{(1), 1} + \Sigma_r^{(1), 2}. \]
We note that the presence of terms $z_p - z_s$ in the numerator allows us to safely drop the condition $p \neq s$ in the sums. 

At this point, we are in a position to explicitly show the symmetry of the degree-two piece in $y, z$. 

\begin{lem} \label{LemDeg2}
    The quantity ${\Omega''}_r^{(2)}$ is symmetric in $y, z$. 
\end{lem}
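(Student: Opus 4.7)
The plan is to leverage the $S_3$-symmetry of the denominator $(t_r-t_m)(t_r-t_s)(t_r-t_p)$ and the factor $b_{mr}b_{sp}+b_{ms}b_{rp}+b_{mp}b_{sr}$ in the summation indices $(m,s,p)$ to reduce the lemma to a polynomial identity, which is then verified via Plücker-type relations.

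First I would observe that the denominator is manifestly $S_3$-invariant, and the $b$-factor equals the sum over the three perfect matchings of the four-element set $\{r,m,s,p\}$ (using the symmetry $b_{ij}=b_{ji}$ of $A^{-1}$), hence is $S_3$-invariant as well. Furthermore, since $A_{ij}$ is itself symmetric in $x,y,z$, the matrix $b$ is invariant under $y\leftrightarrow z$. Therefore, averaging over $S_3$ acting on $(m,s,p)$ and using the $S_3$-invariance of the other factors, the $y\leftrightarrow z$ symmetry of ${\Omega''}_r^{(2)}$ reduces to the polynomial identity $\mathrm{Sym}_{S_3}[N_y]=\mathrm{Sym}_{S_3}[N_z]$, where $N_y(m,s,p):=(x_r-x_m)(x_s-x_p)(y_r-y_s)(y_m-y_p)(z_r-z_m)(z_p-z_s)$ is the numerator and $N_z$ is its $y\leftrightarrow z$ swap.

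To prove this identity, I would introduce, for each $w\in\{x,y,z\}$, the ``Plücker coordinates'' $W_k:=(w_r-w_k)(w_i-w_j)$, where $\{i,j,k\}=\{m,s,p\}$ with $i<j$ in a fixed ordering. These satisfy the three-term relation $W_1-W_2+W_3=0$, an instance of the classical identity $(a-b)(c-d)-(a-c)(b-d)+(a-d)(b-c)=0$. Grouping the six terms of $\mathrm{Sym}[N_y]$ by their $x$-factor (three possibilities, according to which of $\{m,s,p\}$ is paired with $r$), the two summands in each group combine, via the above identity applied to the $y$-part, into a single expression of the form $\pm X_a Z_a(Y_b\pm Y_c)$; the analogous procedure for $\mathrm{Sym}[N_z]$ yields the same structure with $Y$ and $Z$ interchanged. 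Eliminating $W_2$ via $W_2=W_1+W_3$ for each $w$, both expressions reduce to the same polynomial in $\{X_aY_bZ_c:a,b,c\in\{1,3\}\}$, establishing the identity.

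The main obstacle is the careful bookkeeping of signs, which arise because the same unordered pairing $\{\{r,k\},\{i,j\}\}$ can appear with either orientation $(w_i-w_j)$ or $(w_j-w_i)$ in different terms, and these orientations must be tracked in order to identify each summand with the correct Plücker coordinate. Once this is done, the verification is an elementary expansion.
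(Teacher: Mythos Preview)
Your proposal is correct and follows essentially the same route as the paper. Both arguments exploit the permutation invariance of the $b$-factor and the $t$-denominator in $(m,s,p)$ to reduce the claim to a polynomial identity in the numerators, and both verify that identity via the three-term Plücker relation $(a-b)(c-d)-(a-c)(b-d)+(a-d)(b-c)=0$. The only cosmetic difference is that the paper symmetrizes over the cyclic group $\mathbb{Z}_3$ (three terms) rather than the full $S_3$, then eliminates the $x$-Plücker coordinate alone and checks the resulting two pieces are each $y\leftrightarrow z$ symmetric; your version symmetrizes over all of $S_3$ and eliminates one coordinate for each of $x,y,z$ before comparing. Neither choice changes the substance of the argument.
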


\begin{proof}
We begin by noting that the terms $b_{mr}b_{sp}+b_{ms}b_{rp}+b_{mp}b_{sr}$ are invariant under any cyclic permutation of the indices $m, s, p$. By a sum over cyclic permutations of $m, s, p$, we see that for fixed $m,s,p$ the coefficient on $b_{mr}b_{sp}$ is $\frac{1}{2}(A_{rmsp}+A_{rspm}+A_{rpms})$. We will prove that $A_{rmsp}+A_{rspm}+A_{rpms}$ is symmetric in $y$ and $z$.

Define $H_{rmsp}(x)$ as follows: 
\[ H_{rmsp}(x) =(x_r-x_m)(x_s-x_p) . \] 
We have 
\[A_{rmsp}=\frac{H_{rspm}(y)H_{rmsp}(x)H_{rmsp}(z)}{(t_r-t_m)(t_r-t_s)(t_r-t_p)}.\] 
Since the denominator is symmetric in $m,p,s$ we should prove that the sum of cyclic shifts of $H_{rspm}(y)H_{rmsp}(x)H_{rmsp}(z)$ in these three indices is symmetric with respect to $y,z$. Explicitly, we wish to show that
\[ H_{rspm}(y)H_{rmsp}(x)H_{rmsp}(z) + H_{rmsp}(y)H_{rpms}(x)H_{rpms}(z) + H_{rpms}(y)H_{rspm}(x)H_{rspm}(z)  \]
is symmetric in $y, z$. 

We first note by direct computation that 
\[ H_{rmsp}(x)+H_{rspm}(x)+H_{rpms}(x)=0,  \]
so we may replace $H_{rpms}(x)$ with $-H_{rmsp}(x)-H_{rspm}(x)$. We find that
\[ H_{rspm}(y)H_{rmsp}(x)H_{rmsp}(z) + H_{rmsp}(y)H_{rpms}(x)H_{rpms}(z) + H_{rpms}(y)H_{rspm}(x)H_{rspm}(z) \]
\[ = C^{(1)}_{rmsp} + C^{(2)}_{rmsp}, \]
where
\[ C^{(1)}_{rmsp} = H_{rmsp}(x)(H_{rspm}(y)H_{rmsp}(z)-H_{rmsp}(y)H_{rpms}(z));  \]
\[ C^{(2)}_{rmsp} = H_{rspm}(x)(H_{rpms}(y)H_{rspm}(z)-H_{rmsp}(y)H_{rpms}(z)) .\]

We will prove that the first cyclic shift $C_{rmsp}^{(1)}$ is symmetric in $y,z$; the proof that $C_{rmsp}^{(2)}$ is symmetric in $y, z$ is entirely analogous. We wish to show that
\[H_{rspm}(y)H_{rmsp}(z)-H_{rmsp}(y)H_{rpms}(z)=H_{rspm}(z)H_{rmsp}(y)-H_{rmsp}(z)H_{rpms}(y).\]
Indeed, the leftmost and the rightmost term have sum 
\[H_{rmsp}(z)(H_{rspm}(y)+H_{rpms}(y))=-H_{rmsp}(z)H_{rmsp}(y).\] 
The middle two terms have sum 
\[H_{rmsp}(y)(H_{rpms}(z)+H_{rspm}(z))=-H_{rmsp}(y)H_{rmsp}(z),\] 
hence the left-hand side is equal to the right-hand side. Thus, the first cyclic shift is symmetric. The second cyclic shift is done similarly. We are therefore done.
\end{proof}

Now, we focus on the degree-one pieces.

\begin{lem} \label{LemDeg1}
In particular, define by $\Sigma_{r}^{(1)}$ the remaining terms of degree 1 after we have transformed the degree-two piece as above:
\[ \Sigma^{(1)}_{r} = \Sigma_{r}^{(1),1} + \Sigma_{r}^{(1),2} \]
Then $\Omega_r^{(1)} + \Sigma_r^{(1)}$ is symmetric in $y, z$. 
\end{lem}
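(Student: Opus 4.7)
The plan is to fill in the ``$\cdots$'' symmetric remainders in $\Sigma_r^{(1),1}$ and $\Sigma_r^{(1),2}$ explicitly, pair the pieces of $\Omega_r^{(1)}$ against these remainders so that all asymmetric contributions cancel, and apply one partial-fraction identity to exhibit the $y\leftrightarrow z$ symmetry of the residue.

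First I would compute the ``$\cdots$'' remainders using the $\delta$-terms in the auxiliary identities displayed just before $\Sigma_r^{(1),1}$ and $\Sigma_r^{(1),2}$ are defined. The constraint $s\ne r$ kills the $\delta_{sr}$ term in the first identity but not the $\delta_{ms}$ term, producing the additional piece
\[ \sum_{s\ne r}\frac{(x_r-x_s)(y_r-y_s)(z_r-z_s)}{(t_r-t_s)^2}b_{rs}, \]
which is manifestly $y\leftrightarrow z$-symmetric; the constraints $s,p\ne r$ kill both $\delta$-terms in the second identity, so no extra remainder appears there. Next I would perform two exact cancellations. The first sum of $\Omega_r^{(1)}$ (the $b_{rm}$-piece, after inserting the outer $\sum_{s\ne r}\tfrac{1}{t_r-t_s}$) is the negative of the explicit asymmetric term of $\Sigma_r^{(1),1}$. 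Splitting the second sum of $\Omega_r^{(1)}$ into its $p=r$ and $p\ne r$ parts, the $p=r$ contribution evaluates, via $(x_s-x_r)(z_s-z_r)=(x_r-x_s)(z_r-z_s)$, $b_{sr}=b_{rs}$, and $(t_s-t_r)(t_r-t_s)=-(t_r-t_s)^2$, to the negative of the symmetric remainder of $\Sigma_r^{(1),1}$ just computed, cancelling it.

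After these cancellations, what remains is the $p\ne r$ part of the second sum of $\Omega_r^{(1)}$ combined with $\Sigma_r^{(1),2}$. Since $(x_s-x_p)(z_s-z_p)b_{sp}/(t_s-t_p)$ is antisymmetric under $s\leftrightarrow p$, I would symmetrize over this exchange to obtain an expression proportional to $\tfrac{y_r-y_s}{t_r-t_s}-\tfrac{y_r-y_p}{t_r-t_p}$. Using the identity
\[ \frac{y_r-y_s}{t_r-t_s}-\frac{y_r-y_p}{t_r-t_p}=\frac{(y_r-y_s)(t_s-t_p)+(y_p-y_s)(t_r-t_s)}{(t_r-t_s)(t_r-t_p)}, \]
the first numerator term produces a sum that cancels $\Sigma_r^{(1),2}$ exactly, while the second leaves
\[ \tfrac{1}{2}\sum_{s,p\ne r,\,s\ne p}\frac{(y_p-y_s)(x_s-x_p)(z_s-z_p)}{(t_s-t_p)(t_r-t_p)}b_{sp}. \]
The product $(y_p-y_s)(z_s-z_p)$ is $y\leftrightarrow z$-invariant (each factor changes sign, and the two sign changes cancel), and $b_{sp}$ is itself $y\leftrightarrow z$-symmetric since $A$ is; all other factors do not involve $y,z$. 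Hence the remaining expression is manifestly symmetric in $y,z$.

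I expect the main obstacle to be identifying the correct partial-fraction rewriting of $\tfrac{y_r-y_s}{t_r-t_s}-\tfrac{y_r-y_p}{t_r-t_p}$ in the last step: this combination admits several equivalent forms, and only the one above cleanly separates the piece that cancels $\Sigma_r^{(1),2}$ from a manifestly symmetric residue. The remaining bookkeeping of index restrictions ($s\ne r$, $p\ne r$, $p\ne s$) through the various splittings is routine but requires care.
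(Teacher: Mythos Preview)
Your proposal is correct and follows essentially the same approach as the paper: cancel the first sum of $\Omega_r^{(1)}$ against the asymmetric part of $\Sigma_r^{(1),1}$, then combine the remaining pieces via an $s\leftrightarrow p$ symmetrization together with a partial-fraction identity to reveal the $y\leftrightarrow z$ symmetry. You are slightly more careful than the paper in explicitly tracking the $\delta_{ms}$ remainder of $\Sigma_r^{(1),1}$ and showing it cancels the $p=r$ contribution from the second sum of $\Omega_r^{(1)}$ (the paper silently restricts to $p\ne r$, which is permissible since both pieces are already $y\leftrightarrow z$-symmetric on their own), and you symmetrize before combining fractions whereas the paper combines first (producing the factor $2t_r-t_s-t_p$) and symmetrizes afterward; these are cosmetic reorderings of the same computation.
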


\begin{proof}
In particular, let us denote by $\Sigma_{r}^{(1)}$ the remaining terms of degree 1 after we have transformed the degree-two piece as above. We have computed the terms of $\Sigma_{r}^{(1)}$ already: 
\[ \Sigma^{(1)}_{r} = \Sigma_{r}^{(1),1} + \Sigma_{r}^{(1),2} \]
\[ = \sum_{m, s \neq r} \frac{(x_r - x_m)(y_r - y_s)(z_r - z_m)}{(t_r - t_m)(t_r - t_s)} b_{rm} - \frac{1}{2} \sum_{p, s \neq r} \frac{(x_s - x_p)(y_r - y_s)(z_s - z_p)}{(t_r - t_p)(t_r - t_s)} b_{sp} . \]
On the other hand, the degree-one terms present in the original $\Omega_{r}^{(1)}$ are as follows: 
\[ \Omega_{r}^{(1)} = -\sum_{m, s \neq r}  \frac{(x_r - x_m)(y_r - y_s)(z_r - z_m)}{(t_r - t_m)(t_r - t_s)} b_{rm} + \sum_{p, s \neq r} \frac{(x_s - x_p)(y_r - y_s)(z_s - z_p)}{(t_r - t_s)(t_s - t_p)} b_{sp} .\]
The combined degree-one term we require is $\Omega_{rs}^{(1)} + \Sigma_{rs}^{(1)}$. The first terms of $\Omega_{rs}^{(1)}$ and $\Sigma_{rs}^{(1)}$ cancel, so we are left with
\[ \Omega_{r}^{(1)} + \Sigma_{r}^{(1)} = \sum_{p, s \neq r} \frac{(x_s - x_p)(y_r - y_s)(z_s - z_p)}{(t_r - t_s)} \left[ \frac{1}{t_s - t_p} - \frac{1}{2} \frac{1}{t_r - t_p} \right] b_{sp}  = \]
\[ \frac{1}{2} \sum_{p, s \neq r} \frac{(x_s - x_p)(y_r - y_s)(z_s - z_p)(2t_r - t_s - t_p)}{(t_r - t_p)(t_r - t_s)(t_s - t_p)} b_{sp} . \]
Since $b_{sp}$ is symmetric under $s \leftrightarrow p$ for the summed indices $s, p$, we may symmetrize its coefficient. We find that
\[ \frac{(x_s - x_p)(y_r - y_s)(z_s - z_p)(2t_r - t_s - t_p)}{(t_r - t_p)(t_r - t_s)(t_s - t_p)} \ \longmapsto  \] 
\[ -\frac{1}{2} \frac{(x_s - x_p)(y_s - y_p)(z_s - z_p)(2t_r - t_s - t_p)}{(t_r - t_p)(t_r - t_s)(t_s - t_p)}. \]
But we now see that this summand is symmetric in $y, z$. Thus, the entirety of $\Omega_{r}$ is indeed symmetric in $y, z$, so $G_r^y$ and $G_r^z$ act identically. We are done. 
\end{proof}

Now, we can put all of the pieces together. Recall that
\[ \Omega_r = \Omega_r^{(2)} + \Omega_r^{(1)} = {\Omega''}_r^{(2)} + (\Omega_r^{(1)} + \Sigma_r^{(1)}). \]
By Lemmas \ref{LemDeg2} and \ref{LemDeg1}, ${\Omega''}_r^{(2)}$ and $\Omega_r^{(1)} + \Sigma_r^{(1)}$ are each symmetric in $y, z$, so $\Omega_r$ must also be. By Lemma \ref{LemOmega}, $G_r^y(\det A)^{-1/2}$ is therefore symmetric in $y, z$, so 
\[ G_r^y(\det A)^{-1/2} = G_r^z(\det A)^{-1/2}, \]
exactly as we wanted to show. We are done. 

\subsection{$\lambda$-Twisted Generalization}
\label{subsec:twist}

A natural generalization of our formula incorporates a so-called $\lambda$-\textit{twist}. In \cite{EFK1}, $\lambda$-twisted versions of the Gaudin operators are defined: 
\[ G_i^\lambda = \sum_{j \neq i} \frac{1}{t_i - t_j} \left[ - (y_i - y_j)^2 \p_i \p_j + (y_i - y_j)(\lambda_i \p_i - \lambda_j \p_j) + \frac{\lambda_i\lambda_j}{2} \right]. \]

In particular, we modify the Gaussian integral representation of $K_3$ as follows to obtain a kernel $K_3^\lambda$:
\[ K_3^\lambda(x, y, z; t) = \int_{\B{C}}  d^m u d^m \bar{u} \left[ \prod_{i = 0}^m  \abs{u_i} ^{-2(\l_i + 1)} \right] e^{(\mathbf{u}, A\mathbf{u}) - (\mathbf{\bar{u}}, \bar{A}\mathbf{\bar{u}})},  \]
This integral can be sensibly defined in the sense of tempered distributions as per the techniques in Appendices \ref{sec:diffint} and \ref{sec:tempereddist}. In particular, we show in \ref{prop:diffoponfourier} that we may take the action of the holomorphic Gaudin operator inside the integral sign. Now, let $n_i = -\l_i - 1$; the action of the (nonconstant part of the) holomorphic Gaudin operator $G_i^\l$ on the holomorphic part of the above integral yields the following:
\[ G_r^\l \left( \int d^n u \left[ \prod_{i = 1}^n u_i^{n_i} \right] e^{(\mathbf{u}, A \mathbf{u})}  \right) = \]
\[   \int d^n u \left[ \prod_{i = 1}^n u_i^{n_i} \right] \sum_{s \neq r} \frac{1}{t_r - t_s} \left[ - (y_r - y_s)^2 \p_r \p_s + (y_r - y_s)((n_s + 1) \p_r - (n_r + 1) \p_s)  \right] e^{(\mathbf{u}, A \mathbf{u})}  \]
\[ \int d^n u \left[ \prod_{i = 1}^n u_i^{n_i} \right] e^{(\mathbf{u}, A \mathbf{u})} \times \]
\[ = \Bigg[ -\sum_{s\neq r}\sum_{m\neq r, p \neq s} \frac{(y_r - y_s)^2(x_r - x_m)(x_s - x_p)(z_r - z_m)(z_s - z_p)}{(t_r - t_s)(t_r - t_m)(t_s - t_p)} u_r u_s u_m u_p \]
\[ + \sum_{s \neq r} (n_s + 1) \sum_{m \neq r} \frac{(x_r - x_m)(y_r - y_s)(z_r - z_m)}{(t_r - t_s)(t_r - t_m)} u_r u_m  \]
\begin{equation} \label{eqn:twistedgaudin} - (n_r + 1) \sum_{s \neq r} \sum_{p \neq s} \frac{(x_s - x_p)(y_r - y_s)(z_s - z_p)}{(t_r - t_s)(t_s - t_p)} u_s u_p \Bigg] .
\end{equation}
At this point, we wish to integrate the second and third sums by parts, which is legitimate by Proposition \ref{prop:integrationbyparts}. In particular, we recognize that
\[ (n_s + 1) u_s^{n_s} = \frac{d}{du_s}  u_s^{n_s + 1}. \]
In integrating by parts, we integrate this term and differentiate the coefficient it multiplies (including the exponential) as follows: 
\[ \int u \, dv \implies - \int v \, du. \]
Taking
\[  u = e^{(\mathbf{u}, A\mathbf{u})} \sum_{m \neq r} \frac{(x_r - x_m)(y_r - y_s)(z_r - z_m)}{(t_r - t_s)(t_r - t_m)} u_r u_m , \quad dv = (n_s + 1) u_s^{n_s} \cdot \prod_{i \neq s} u_i^{n_i} du_s,  \]
we find that 
\[  u \, dv = -  \prod_{i = 1}^n u_i^{n_i} du_s \cdot (n_s + 1) \sum_{m \neq r} \frac{(x_r - x_m)(y_r - y_s)(z_r - z_m)}{(t_r - t_s)(t_r - t_m)} u_r u_m  ; \]
\[ - v \, du = \prod_{i = 1}^n u_i^{n_i} du_s \cdot \sum_{m \neq r, p \neq s} \frac{(x_r - x_m)(x_s - x_p)(y_r - y_s)(y_s - y_p)(z_r - z_m)(z_s - z_p)}{(t_r - t_s)(t_r - t_m)(t_s - t_p)} u_r u_s u_m u_p . \] 
Thus, dropping boundary terms as per Proposition \ref{prop:integrationbyparts}, we may effectively replace the second sum in brackets in \eqref{eqn:twistedgaudin} with 
\[ \sum_{s \neq r} (n_s + 1) \sum_{m \neq r} \frac{(x_r - x_m)(y_r - y_s)(z_r - z_m)}{(t_r - t_s)(t_r - t_m)} u_r u_m  \implies \]
\begin{equation} \label{eqn:secondsum}  -\sum_{s \neq r} \sum_{m \neq r, p \neq s} \frac{(x_r - x_m)(x_s - x_p)(y_r - y_s)(y_s - y_p)(z_r - z_m)(z_s - z_p)}{(t_r - t_s)(t_r - t_m)(t_s - t_p)} u_r u_s u_m u_p . \end{equation}
By analogous reasoning, 
\[ -(n_r + 1) \sum_{s \neq r} \sum_{p \neq s} \frac{(x_s - x_p)(y_r - y_s)(z_s - z_p)}{(t_r - t_s)(t_s - t_p)} u_s u_p  \implies \]
\begin{equation} \label{eqn:thirdsum}  \sum_{s \neq r} \sum_{m \neq r, p \neq s} \frac{(x_r - x_m)(x_s - x_p)(y_r - y_s)(y_r - y_m)(z_r - z_m)(z_s - z_p)}{(t_r - t_s)(t_r - t_m)(t_s - t_p)} u_r u_s u_m u_p  \end{equation}
The term in \eqref{eqn:secondsum} in now combines nicely with the first sum in brackets in \eqref{eqn:twistedgaudin}: 
\[ -\sum_{s\neq r}\sum_{m\neq r, p \neq s} \frac{(y_r - y_s)^2(x_r - x_m)(x_s - x_p)(z_r - z_m)(z_s - z_p)}{(t_r - t_s)(t_r - t_m)(t_s - t_p)} u_r u_s u_m u_p \]
\[ + \sum_{s \neq r} (n_s + 1) \sum_{m \neq r} \frac{(x_r - x_m)(y_r - y_s)(z_r - z_m)}{(t_r - t_s)(t_r - t_m)} u_r u_m = \]
\[ -\sum_{s \neq r} \sum_{m \neq r, p \neq s} \frac{(x_r - x_m)(x_s - x_p)(y_r - y_s)(y_r - y_p)(z_r - z_m)(z_s - z_p)}{(t_r - t_s)(t_r - t_m)(t_s - t_p)} u_r u_s u_m u_p . \]
Note that the term with $r = p$ vanishes due to the factor of $y_r - y_p$ in the numerator. Thus, the summation indices $s, p$ run over the same range. We may thus symmetrize the coefficient of $u_r u_s u_m u_p$ in the indices $s, p$. Note that everything within the sum is symmetric in $s, p$ except for $\frac{1}{(t_r - t_s)(t_s - t_p)}$. We find
\[ \frac{1}{(t_r - t_s)(t_s - t_p)} + \frac{1}{(t_r - t_p)(t_p - t_s)} = \frac{1}{(t_r - t_s)(t_r - t_p)}, \]
and we obtain
\[ - \frac{1}{2} \sum_{s \neq r} \sum_{m \neq r, p \neq r} \frac{(x_r - x_m)(x_s - x_p)(y_r - y_s)(y_r - y_p)(z_r - z_m)(z_s - z_p)}{(t_r - t_s)(t_r - t_m)(t_r - t_p)} u_r u_s u_m u_p. \]
Now, write
\[ y_r - y_p = y_r - y_m + y_m - y_p . \]
Focus on the piece corresponding to $y_m - y_p$: 
\[  - \frac{1}{2} \sum_{m, s, p \neq r} \frac{(x_r - x_m)(x_s - x_p)(y_r - y_s)(y_m - y_p)(z_r - z_m)(z_s - z_p)}{(t_r - t_s)(t_r - t_m)(t_s - t_p)} u_r u_m u_s u_p . \]
Using the notation of Sec. \ref{subsec:mainproof}, the coefficient of $u_r u_m u_s u_p$ is given by
\[ A_{rmsp}=\frac{H_{rspm}(y)H_{rmsp}(x)H_{rmsp}(z)}{(t_r-t_m)(t_r-t_s)(t_r-t_p)} . \]
We already showed that $H_{rspm}(y)H_{rmsp}(x)H_{rmsp}(z)$ is symmetric in $y, z$ when summed over cyclic permutations of the indices $s, m, p$. The relevant computations are performed in Sec. \ref{subsec:mainproof}. The same computation follows through in this case, as the term $u_r u_m u_s u_p$ is invariant under these cyclic permutations. 

This leaves the following term: 
\[ - \frac{1}{2} \sum_{s \neq r} \sum_{m \neq r, p \neq r} \frac{(x_r - x_m)(x_s - x_p)(y_r - y_s)(y_r - y_m)(z_r - z_m)(z_s - z_p)}{(t_r - t_s)(t_r - t_m)(t_r - t_p)} u_r u_s u_m u_p. \]
Adding this to the term in \eqref{eqn:thirdsum} gives
\[ \sum_{s \neq r} \sum_{m \neq r, p \neq r} \tfrac{(x_r - x_m)(x_s - x_p)(y_r - y_s)(y_r - y_m)(z_r - z_m)(z_s - z_p)}{(t_r - t_s)(t_r - t_m)} \left[ \tfrac{1}{t_s - t_p} - \tfrac{1}{2} \tfrac{1}{t_r - t_p} \right] u_r u_s u_m u_p =\]
\[  \frac{1}{2} \sum_{s \neq r} \sum_{m \neq r, p \neq r} \tfrac{(x_r - x_m)(x_s - x_p)(y_r - y_s)(y_r - y_m)(z_r - z_m)(z_s - z_p)(2t_r - t_s - t_p)}{(t_r - t_s)(t_r - t_p)(t_s - t_p)(t_r - t_m)} u_r u_s u_m u_p  . \]
Symmetrizing the coefficient of $u_r u_s u_m u_p$ in $s$ and $p$, we obtain
\[ -\frac{1}{4} \sum_{s \neq r} \sum_{m \neq r, p \neq r} \tfrac{(x_r - x_m)(x_s - x_p)(y_s - y_p)(y_r - y_m)(z_r - z_m)(z_s - z_p)(2t_r - t_s - t_p)}{(t_r - t_s)(t_r - t_p)(t_s - t_p)(t_r - t_m)} u_r u_s u_m u_p  , \]
which is symmetric in $y, z$. We are done. 

\section{Discussion}
\label{sec:discussion}

In summary, we have proved a conjectured formula for an intertwining kernel for the quantum Hitchin Hamiltonians in genus zero (Gaudin operators) over $\mathbb{C}$ with $\abs{S} \geq 4$ marked points. We have additionally demonstrated a natural generalization of our formula to account for a possible ``$\lambda$-twisting.'' With all of this in mind, we now assess the various implications of our result. 

First of all, it is of natural interest to consider the analytic geometric Langlands program over fields other than $\mathbb{C}$. The case of $\mathbb{R}$ is handled very similarly to $\mathbb{C}$. Gaiotto \cite{Gaiotto2021} produces a very similar formula for an intertwining kernel for the \textit{real} Gaudin system in three sets of variables:
\[ K_3^{\mathbb{R}}(x, y, z; t) = \frac{1}{\sqrt{\abs{\det A}}} . \]
The argument from the previous section goes through essentially unchanged to demonstrate this formula as well. We represent $(\det A)^{-1/2}$ by a Gaussian integral, act on the integral by the Gaudin operator, and apply Wick's theorem. The resulting expression can be massaged into a form which is manifestly symmetric in the $x, y, z$ variables. 

Over local fields, the story is more complicated. We no longer have at our disposal the differential Gaudin operators, and to understand the spectrum of the Hecke operators, it would thus be an important stop to construct a Gaiotto kernel over local fields $F$. One might naturally guess something of the following form:  
\[ K_3^{F}(x, y, z; t) = \frac{\theta(\det A)}{\sqrt{\Abs{\det A}}}, \]
where $\Abs{\cdot}$ denotes a (non-Archimedean) absolute value over $F$. Here we define $\theta(x) = 1$ if $x$ is a square in $F$ and $\theta(x) = 0$ otherwise.

To show that $K_3^F$ intertwines the Hecke operators over \textit{all} fields $F$ is therefore an important next step. We will show this with a calculation of the action of the Hecke operators on $K_3^F$ in the upcoming work~\cite{klyuev22}.



\section*{Acknowledgements}

SR would like to thank Pavel Etingof for suggesting this project and providing much guidance along the way. SR would also like to thank Davide Gaiotto for insightful discussions. DK would like to thank Pavel Etingof for suggesting this project, helping us along the way and suggesting a way to make sense of twisted Gaiotto kernel in terms of tempered distributions. Both authors want to thank David Jerison, Ankur Moitra, and the SPUR REU program for funding this research and for organizing meetings. 

\appendix

\section{Differentiating under the Integral Sign}
\label{sec:diffint}

In this section, we want to prove that 
\[ \frac{\partial}{\partial x_i}\int f(x,y)dy=\int\frac{\partial}{\partial x_i}f(x,y)dy \]
for sufficiently ``good'' functions $f$. This will allow us to make the analytic manipulations we use in Section~\ref{subsec:mainproof}. We will also use the result of this section in the next Section~\ref{sec:tempereddist}.

Since we are differentiating in one variable at a time, we will assume that $x$ is just one variable. Let $n$ be a positive integer, and suppose $f(x,y)$ is a smooth function defined on $U\times \RN^n$, where $U$ is an open subset of $\RN$. Let 
\[ g(x)=\int_{\RN^n} f(x,y) dy,\] where we assume that the right-hand side is absolutely convergent for all $x\in U$.
Let $x^*$ be a point in $U$. Also denote $\partial_x=\frac{\partial}{\partial x}$. 

\begin{lem}
\label{LemDiffIntAbsolute}
\begin{enumerate}
    \item 
    Suppose that there exists a neighborhood $V$ of $x^*$ and a function $M(y)$ with the following property: for all $x\in V$ we have $\abs{\partial_x f(x,y)}+\abs{\partial_x^2 f(x,y)}\leq M(y)$ and $\int_{\RN^n} M(y)\leq\infty$. Then $g(x)$ has a derivative at point $x^*$ that can be computed as 
\[   g'(x^*) = \int_{\RN^n} \partial_{x} f(x^*,y) dy. \]
    \item
    Suppose  that $x\in \RN^m$, $U$ is an open subset of $\RN^m$, $f(x,y)$ is a smooth function defined on $U\times \RN^n$, $g(x)$ is defined as above. Let $x^*$ be a point in $U$. Assume that for any partial differential operator $D$ in $x$ with constant coefficients there exists a neighborhood $V_D$ of $x^*$ and a function $M_D(y)$ with the following property: for all $x\in V_D$ we have $\abs{D f(x,y)}\leq M_D(y)$ and $\int_{\RN^n} M_D(y)\leq\infty$. Then $g(x)$ is smooth at $x^*$ and $Dg(x)$ can be computed as $\int_{\RN^n} Df(x^*,y)dy$.
    
\end{enumerate}

\end{lem}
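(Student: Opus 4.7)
The plan is to prove part (1) via Taylor's theorem with integral remainder, then deduce part (2) by induction on the order of $D$.

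For part (1), I would shrink $V$ to an open interval around $x^*$ and pick $h$ small enough that $[x^*-|h|, x^*+|h|] \subset V$. Taylor's formula in the $x$-variable, applied pointwise in $y$, gives
\[ f(x^*+h, y) - f(x^*, y) - h\, \partial_x f(x^*, y) = \int_0^h (h-s)\, \partial_x^2 f(x^*+s, y)\, ds. \]
The hypothesis $|\partial_x^2 f(x, y)| \leq M(y)$ on $V$ yields the pointwise bound
\[ \bigl| f(x^*+h, y) - f(x^*, y) - h\, \partial_x f(x^*, y) \bigr| \leq \tfrac{h^2}{2}\, M(y), \]
and the companion bound $|\partial_x f(x^*, y)| \leq M(y)$ guarantees that $\int_{\RN^n} \partial_x f(x^*, y)\, dy$ converges absolutely. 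Integrating in $y$ (Fubini is justified by the pointwise bound and the integrability of $M$) and using $\int M < \infty$ produces
\[ g(x^*+h) - g(x^*) - h \int_{\RN^n} \partial_x f(x^*, y)\, dy = O(h^2), \]
from which dividing by $h$ and letting $h\to 0$ gives the claimed formula.

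For part (2), I would induct on $|D|$. The base case $|D|=0$ reduces to continuity of $g$ at $x^*$, which follows from the dominated convergence theorem applied with dominating function $M_{\mathrm{Id}}$. For the inductive step, write $D = \partial_{x_i} D'$ with $|D'| = |D|-1$ and apply part (1) to the one-variable function $t \mapsto D'g(x^* + t e_i)$. The inductive hypothesis, applied at every $x$ in a small enough common neighborhood $V^*$ of $x^*$ (obtained as a finite intersection of the $V_D$'s appearing in the hypothesis), yields $D'g(x) = \int_{\RN^n} D'f(x, y)\, dy$ throughout $V^*$. The bounds $M_{\partial_{x_i} D'} = M_D$ and $M_{\partial_{x_i}^2 D'}$ then furnish exactly the hypotheses of part (1) applied to $D'f$ in the $x_i$ variable, giving $Dg(x^*) = \int_{\RN^n} Df(x^*, y)\, dy$. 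One more application of dominated convergence shows $Dg$ is continuous at $x^*$, closing the induction and establishing smoothness.

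The main subtlety to navigate will be the bookkeeping for the induction: at each step we need the identity $D'g(x) = \int D'f(x, y)\, dy$ to hold not just at $x^*$ but on an \emph{open} neighborhood, so that part (1) can be applied to its derivative in the $x_i$ direction. This is handled by shrinking once and for all to a common $V^*$ contained in each $V_D$ appearing in the (finitely many) applications of the inductive hypothesis needed to reach the given operator $D$; for every base point $x_0 \in V^*$ the same dominating functions $M_D$ remain valid on a neighborhood of $x_0$ contained in $V^*$, so the inductive hypothesis transports uniformly across $V^*$.
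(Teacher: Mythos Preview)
Your proof is correct and follows essentially the same approach as the paper: for part~(1) you use Taylor's theorem with integral remainder where the paper applies the mean value theorem twice, but both arguments exploit the bound on $\partial_x^2 f$ in the same way to show the difference quotient converges; for part~(2), both you and the paper deduce the result from part~(1) by induction on the order of $D$.
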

\begin{proof}
The second statement immediately follows from the first using induction on the order of $D$.

 By definition,
\[g'(x^*)=\lim_{\eps\to 0}\frac{1}{\eps}(g(x^*+\eps)-g(x^*)).\]
We have 
\[g(x^*+\eps )-g(x^*)=\int_{\RN^n} \left[ f(x^*+ x_i,y)-f(x^*,y) \right] dy=\eps\int_{\RN^n}\partial_x f(x^*+a(y),y)dy. \]
Here, we have used mean value theorem, and we have let $a$ be a map from $\RN^n$ to $[0,\eps]$.  Thus, 
\[ \frac{1}{\eps}(g(x^*+\eps x_i)-g(x^*)) = \int_{\RN^n}\partial_x f(x^*+x_i a(y),y)dy .\]
Using the mean value theorem again, we find that
\[ \int_{\RN^n}\partial_x f(x^*+ a(y),y)dy - \int_{\RN^n}\partial_x f(x^*,y)dy =   \eps\int_{\RN^n} \partial^2_x f(x^*+b(y),y)dy,\]
where $b$ is another map from $\RN^n$ to $[0,\eps]$. Using the condition in the statement of the lemma we see that the expression on the right hand side is at most $C\eps$, where $C$ is a constant positive number. Hence it tends to zero when $\eps$ tends to zero, and indeed 
\[  g'(x^*) = \int_{\RN^n}\partial_x f(x^*,y)dy. \]
\end{proof}

\begin{lem}
\label{LemExponentIsGood}
Let $D_{km}$ be fixed matrices, $A=\sum y_kz_m D_{km}$. Let $U$ be the set of all $x$ such that the corresponding matrix $A$ is positive definite, $x^*$ be an element of $U$.

Then function $f(x,\mathbf{u})=e^{-\frac{1}{2}(A\mathbf{u},\mathbf{u})}$ satisfies conditions of Lemma~\ref{LemDiffIntAbsolute} with $y=\mathbf{u}$. In particular, we can compute a partial differential operator $D(\sqrt{\det A})$ as $\int_{\RN^n}D(e^{-\frac{1}{2}(A\mathbf{u},\mathbf{u})})d\mathbf{u}$ times a constant.
\end{lem}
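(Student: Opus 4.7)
The plan is to verify the hypothesis of part 2 of Lemma \ref{LemDiffIntAbsolute}: for every constant-coefficient partial differential operator $D$ in the parameters $x$, I must produce an open neighborhood $V_D$ of $x^*$ and an integrable function $M_D(\mathbf{u})$ on $\RN^n$ such that $|Df(x,\mathbf{u})| \leq M_D(\mathbf{u})$ for all $x \in V_D$. Once these hypotheses are verified, the concluding statement follows at once from Lemma \ref{lem:gaussian}: that lemma identifies $(\det A)^{-1/2}$ with $\int_{\RN^n} f(x,\mathbf{u})\, d\mathbf{u}$ up to a fixed normalization constant, and Lemma \ref{LemDiffIntAbsolute} then allows $D$ to pass inside the integral.

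The first step is a uniform Gaussian bound on $f$. Since $A(x) = \sum_{k,m} y_k z_m D_{km}$ is polynomial in the parameters, the smallest eigenvalue $\lambda_{\min}(A(x))$ depends continuously on $x$. Because $\lambda_{\min}(A(x^*)) > 0$ by hypothesis, I may choose a compact neighborhood $\bar V \subset U$ of $x^*$ on which $\lambda_{\min}(A(x)) \geq c$ for some $c > 0$; let $V$ denote its interior. Then $(A(x)\mathbf{u}, \mathbf{u}) \geq c \|\mathbf{u}\|^2$ for all $x \in V$, whence
\[
|f(x,\mathbf{u})| \leq e^{-(c/2)\|\mathbf{u}\|^2} \qquad \text{for every } x \in V.
\]

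The second step is a structural description of $Df$. A simple induction on the order of $D$ shows that
\[
Df(x,\mathbf{u}) = P_D(x,\mathbf{u})\, f(x,\mathbf{u}),
\]
where $P_D$ is a polynomial in $\mathbf{u}$ whose coefficients are polynomial in $x$. The base case is $\partial_{x_j} f = -\tfrac{1}{2}((\partial_{x_j}A)\mathbf{u}, \mathbf{u})\, f$, which has this form because $\partial_{x_j}A$ is independent of $\mathbf{u}$ and polynomial in $x$; each additional derivative preserves the form while raising the polynomial degree in $\mathbf{u}$ by at most two. Restricting to the compact set $\bar V$, the coefficients of $P_D$ are uniformly bounded, so there is a one-variable polynomial $Q_D$ with $|P_D(x,\mathbf{u})| \leq Q_D(\|\mathbf{u}\|)$ for every $x \in V$. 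Combining the two steps gives
\[
|Df(x,\mathbf{u})| \leq Q_D(\|\mathbf{u}\|)\, e^{-(c/2)\|\mathbf{u}\|^2} =: M_D(\mathbf{u}),
\]
and $M_D$ is plainly integrable on $\RN^n$, as required.

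There is no real obstacle in this argument; the only point requiring any care is arranging the Gaussian decay rate $c$ and the coefficient bounds on $P_D$ to be uniform in $x$, which is automatic once one restricts to a compact subneighborhood of $x^*$. The work is essentially packaging continuity of $\lambda_{\min}$ and the polynomial shape of $A(x)$ into estimates with the correct uniformity.
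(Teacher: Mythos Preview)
Your proof is correct and follows essentially the same approach as the paper: both argue that $Df$ has the form $(\text{polynomial in }\mathbf{u})\cdot f$, bound the polynomial uniformly on a neighborhood of $x^*$, and dominate the exponential by a fixed Gaussian via a uniform lower bound on the quadratic form $(A\mathbf{u},\mathbf{u})$. Your version is somewhat more explicit about the source of uniformity (compactness of $\bar V$ and continuity of $\lambda_{\min}$) and about the inductive structure giving $Df = P_D\cdot f$, but the substance is the same.
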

\begin{proof}
For any partial differential operator $D$ with constant coefficients we have $Df(x,\mathbf{u})=p(x,\mathbf{u})e^{-\frac{1}{2}(A\mathbf{u},\mathbf{u})}$, where $p$ is a polynomial in $x,\mathbf{u}$. In a neighborhood $V$ of $x^*$ we can say that $p(x,\mathbf{u})\leq C_1+C_2 \abs{\mathbf{u}}^N$, where $C_1,C_2,N$ are constant positive numbers. Since any two norms on $\RN^m$ are equivalent, we have that 
\[ (A\mathbf{u},\mathbf{u} )\geq \eps (\mathbf{u},\mathbf{u}) .\]
for any $\mathbf{u}$. In fact, we can choose $\eps > 0$ such that the above equation holds true for all $A$ in a neighborhood $V \ni x^*$. 

Consider $M(\mathbf{u})=(C_1+C_2\abs{\mathbf{u}}^N) e^{-\eps (\mathbf{u},\mathbf{u})}$. We just showed that $\abs{Df(x,\bu)}\leq M(\bu) $ for $x\in V$.
We also see that $\int_{\RN^n}M(\bu)d\bu<\infty$. It follows that $f(x,\bu)$ satisfies the conditions of Lemma~\ref{LemDiffIntAbsolute}.
\end{proof}

With the preceding two lemmas, we have established the validity of differentiating sufficiently convergent real integrals. This finishes the details of the proof in Section~\ref{subsec:mainproof}. In the next section, we will explain how to deal with the integrals arising in the proof of the twisted Gaiotto formula in Section~\ref{subsec:twist}. 

\section{Integrals in the Sense of Tempered Distributions} \label{sec:tempereddist}

In this section, our goal will be  to explain what to do with the integral containing $e^{(A\mathbf{u},\mathbf{u})-\ovl{(A\mathbf{u},\mathbf{u})}}$ in the definition of twisted kernel in Section \ref{subsec:twist}. We wish to verify that the manipulations applied in that section (e.g. differentiation under the integral sign, integration by parts) are legitimate. In effect, we will formalize the details of the proof in Sec. \ref{subsec:twist}.

Our starting point is the following integral: 
\[ \int d^m \bx d^m \bar{\bx} \left[ \prod_{i = 1}^m \abs{x_i}^s \right] e^{(\bx,A\bx)-\ovl{(\bx,A \bx)}} , \] where $(\bx,\by)=\sum_{i=1}^m x_iy_i$ is a bilinear form, not sesquilinear.
This integral is the primary object of interest in Sec. \ref{subsec:twist}, but a rigorous definition thereof is not immediately clear. Moreover, our manipulations of differentiating this integral and integrating by parts are not \textit{a priori} justified. In order to rectify this situation, we will need to define this integral using the notion of a Fourier transform of a tempered distribution. 

Note that the results of this section are not new: distrubutions and operations on them are known since the middle of twentieth century. We include proofs of these results for reader's convenience. It is possible that the properties of the integral used for Gaiotto kernel are described somewhere in the mathematical analysis literature, but the authors could not locate a source formalizing precisely the manipulations used in Sec. \ref{subsec:twist}. For this reason we (re)prove the results we need below.

We say that a smooth function $f\colon \RN^n\to\CN$ is \textit{rapidly decreasing} if for all $n\geq 0$ function $|x|^nf$ is bounded. Let $\mc{S}=\mc{S}(\RN^n)$ be the Schwartz space. It consists of smooth functions $f$ such that $f$ and all its derivatives are rapidly decreasing. The Schwartz space is a topological space with a family of seminorms $\|f\|_{\alpha,\beta}=\sup_{x\in\RN^n} |x^{\alpha}D^{\beta}f|$.
We further define $\mc{S}(\CN^n)$ via $\mc{S}(\RN^{2n})$. Define the \textit{space of tempered distributions} to be continuous dual $\mc{S}'$ of $\mc{S}$. 

The \textit{Fourier transform} $\mc{F}$ is an automorphism $\mc{F} : \mc{S} \to \mc{S}$ defined in the usual way. We can then define Fourier transform on $\mc{S}'$ as a transpose operator. We now introduce some convenient notation. For functions $f,g$ on $\RN^n$ define a bilinear pairing $(f, g)$ via
\[ (f,g) := \int_{\RN^n}d^n\bx f(\bx)g(\bx). \]
Again, this notation is slightly nonstandard, but it will be more convenient for us. We embed $L^p(\RN^n)$, $1\leq p\leq\infty$ into $\mc{S}'(\RN^n)$ via $f(g)=(f,g)$. In this way Fourier transform on $\mc{S}'$ extends Fourier transform $\mc{F}\colon L^1\to L^{\infty}$.

If Fourier transform of $f$ can be computed as a well-behaved conditionally convergent integral, the result will coincide with $\mc{F}f$:

\begin{prop}
\label{PropDecentConditionalFourierIsSchwartsFourier}
Let $f$ be a locally integrable function that belongs to $\mc{S}'(\RN^m)$, 
and let $K_1\subset K_2\subset\cdots$ be a sequence of compact sets with union $\RN^m$. Suppose that 
\[ I_n(\by) := \int_{K_n} d^m \bx \, f(\bx)e^{i(\bx,\by)} \]
is uniformly bounded in $n,\by$ and $\lim_{n \to \infty} I_n(\by)$ exists for all $\by$ and is uniform on compact sets. Then the function 
\[ g(\mb{y}) = \lim_{n \to \infty} I_n(\by)\] 
defined by this limit satisfies $g \in \mc{S}'$; moreover, $g = \mc{F}f$.
\end{prop}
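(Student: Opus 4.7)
The plan is to check the equality $g = \mc{F}f$ by pairing both sides against an arbitrary Schwartz function $\phi \in \mc{S}(\RN^m)$, which is the standard way to verify equality of tempered distributions. Before doing so I would first confirm that $g$ is itself a tempered distribution: since $|I_n(\by)| \leq C$ uniformly in $n$ and $\by$ by hypothesis, the pointwise limit $g$ is a bounded measurable function (in fact continuous, by the locally uniform convergence), so $g \in L^\infty(\RN^m) \subset \mc{S}'$, acting by $g(\phi) = \int_{\RN^m} g(\by)\phi(\by)\, d\by$.

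Next I would unwind $(\mc{F}f)(\phi) = f(\mc{F}\phi)$ by definition of the Fourier transform on $\mc{S}'$. Since $f$ is locally integrable and tempered and $\mc{F}\phi \in \mc{S}$ decays faster than any polynomial, the pairing equals the absolutely convergent Lebesgue integral $\int_{\RN^m} f(\bx)(\mc{F}\phi)(\bx)\, d\bx$. I would exhaust this integral by the compact sets $K_n$: dominated convergence (with dominator $|f(\bx)(\mc{F}\phi)(\bx)|$, which is in $L^1$) gives $f(\mc{F}\phi) = \lim_n \int_{K_n} f(\bx)(\mc{F}\phi)(\bx)\, d\bx$. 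On each $K_n$ the integrand $f(\bx)\phi(\by)e^{i(\bx,\by)}$ is absolutely integrable on $K_n \times \RN^m$ (using $f \in L^1(K_n)$ and $\phi \in \mc{S}$), so Fubini swaps the order to give $\int_{K_n} f(\bx)(\mc{F}\phi)(\bx)\, d\bx = \int_{\RN^m} I_n(\by)\phi(\by)\, d\by$.

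Finally, since $|I_n(\by)\phi(\by)| \leq C|\phi(\by)| \in L^1$, a second application of dominated convergence gives $\lim_n \int I_n(\by)\phi(\by)\, d\by = \int g(\by)\phi(\by)\, d\by = g(\phi)$. Chaining the equalities yields $(\mc{F}f)(\phi) = g(\phi)$ for every $\phi \in \mc{S}$, which is exactly $\mc{F}f = g$.

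The main obstacle I expect is the first dominated-convergence step, namely the integrability of $f \cdot \mc{F}\phi$ on all of $\RN^m$. This is where one uses the nontrivial input that a locally integrable tempered distribution has polynomial growth in the integrated sense $\int |f(\bx)|/(1+|\bx|)^N\, d\bx < \infty$ for some $N$, which combined with the faster-than-polynomial decay of $\mc{F}\phi$ produces an $L^1$ bound. Once this is in hand, the rest of the argument is essentially Fubini together with the uniform bound on $I_n$, so no further analytic input is required.
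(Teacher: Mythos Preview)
Your argument is correct and is essentially the paper's own proof: both pair against a test function, rewrite $(\mc{F}f)(\phi)=(f,\mc{F}\phi)$ as $\lim_n\int_{K_n}f\,\mc{F}\phi$, apply Fubini on $K_n\times\RN^m$ to obtain $\int I_n\phi$, and then pass to the limit. The only difference is cosmetic: you finish with dominated convergence using the majorant $C|\phi|\in L^1$, whereas the paper carries out an explicit $\varepsilon$-splitting into a large compact set (where the locally uniform convergence of $I_n$ applies) and its complement (controlled by the uniform bound on $I_n$)---your version is the cleaner of the two.

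One caveat on your closing paragraph: the claim that every locally integrable tempered distribution satisfies $\int |f(\bx)|(1+|\bx|)^{-N}\,d\bx<\infty$ for some $N$ is false in general; for instance $f(x)=e^{x}\sin(e^{x})=-\tfrac{d}{dx}\cos(e^{x})$ on $\RN$ is locally integrable and tempered, yet that weighted integral diverges for every $N$. The paper does not address this point either---it simply writes $(f,\mc{F}h)=\int f\,\mc{F}h$ without justification---so you are not missing anything the paper supplies, and in every application here $f$ is bounded, so the issue is harmless.
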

\begin{proof}
We have \begin{multline}
\label{EqMcFHEqualsToLimit}
(\mc{F}f,h)=(f,\mc{F}h)=\int d^m\by \, f(\by)\mc{F}h(\by)=\\
\lim_{n \to \infty} \int_{K_n} d^m\by \, f(\by)\mc{F}h(\by)=\lim_{n\to\infty}\int_{K_n} d^m\by \, f(\by)\left[ \int_{\RN} d^m\bx \, e^{i(\bx,\by)}h(\bx) \right] =\\
\lim_{n \to\infty} \int_{\RN} d^m \bx \, h(\bx)\left[ \int_{K_n} d^m\by \, 
 e^{i(\bx,\by)} 
 f(\by) \right]
\end{multline}
In the last step, we used that $f$ is locally integrable to apply Fubini's theorem. Since $I_n(\by) := \int_{K_n} d^m \by \, e^{i(\bx, \by)}f(\by)$ is uniformly bounded, for any $\eps>0$ there exists a compact set $K_{\eps}$ such that the difference between 
\[\int_{\RN} d^m \bx \, h(\bx)\left[ \int_{K_n}  d^m\by  \, 
 e^{i(\bx,\by)}f(\by)\right] \]
and
\[\int_{K_\eps } d^m \bx \, h(\bx)\left[ \int_{K_n} d^m\by  \, e^{i(\bx,\by)}f(\by)\right] \]
is at most $\eps/2$ for any $n$. We may take $K_{\eps}$ so that it contains the ball with the radius $\frac{1}{\eps}$ centered at zero.

Since the limit of $\int_{K_n} d^m \bx \,  f(\bx)e^{i(\bx,\by)}$ is uniform on compact sets we have
\[\lim_{n \to \infty} \int_{K_{\eps}} d^m \bx \, h(\bx)\left[ \int_{K_n}  d^m\by  \, 
 e^{i(\bx,\by)}f(\by)\right]  = \int_{K_\eps } d^m \bx \, h(\bx)\left[ \lim_{n \to \infty}\int_{K_n} d^m\by  \, e^{i(\bx,\by)}f(\by)\right] \]
\[ =\int_{K_{\eps}} d^m \bx \, h(\bx)g(\bx).\]

It follows that for large enough $n$ the difference between 
\[\int_{\RN} d^m \bx \, h(\bx)\left[ \int_{K_n} d^m \by \, e^{i(\bx,\by)}f(\by) \right] \quad \text{and}  \quad \int_{K_{\eps}} d^m \bx \, h(\bx)g(\bx) \]
is at most $\eps$. Using~\eqref{EqMcFHEqualsToLimit}, we deduce that the difference between $(\mc{F}f,h)$ and $\int_{K_{\eps}} d^m \bx \, h(\bx)g(\bx)$ is at most $\eps$. 

Note that $g(\bx)$ is a bounded continuous function, hence it gives an element of $\mc{S}'$ and the integral $\int_{\RN^m}  d^m \bx \, h(\bx)g(\bx)$ can be computed as the limit $\lim_{\eps\to 0}\int_{K_{\eps}}d^m\bx h(\bx)g(\bx)=(\mc{F}f,h)$, hence $g=\mc{F}f$.
\end{proof}

Equipped with a definition of the Fourier transform on tempered distributions, we may now define the integral over a tempered distribution: 

\begin{defn}
Let $f$ be a tempered distribution such that $\mc{F}f$ is represented by a function continuous at zero. Then we define the integral of $f$ as  
\[ \int_{\RN}dx \, f(x) := \mc{F}f(0). \]
\end{defn}

Thus, in order to define an integral 
\[ \int d^m \bx d^m \bar{\bx} \left[ \prod_{i = 1}^m \abs{x_i}^{s_i} \right]  e^{(\bx,A \bx)-\ovl{(\bx,A\bx)}} \] 
in this sense, we should compute 
\[ \mc{F}\left( \left[ \prod_{i = 1}^m \abs{x_i}^{s_i} \right]  e^{(\bx,A \bx)-\ovl{(\bx,A\bx)}} \right). \]
We will break up this computation into several steps. First we will compute $\mc{F}e^{(\bx,A\bx)-\ovl{(\bx,A\bx)}}$ in Lemma \ref{LemFourierTransformOfQuadraticExponent}. The function $\abs{x_i}^{s_i}$ gives an element of $\mc{S}'$ only for $\Re s_i>-2$, but the corresponding distribution can be meromorphically continued to $\CN\setminus -2\ZN_{>0}$, and its Fourier transform is known. A usual Fourier transform of a product of \textit{functions} is a convolution of Fourier transforms under some conditions. In our case, for certain values of $s_i$ we can also make sense of the integral in the definition of convolution and the result is a smooth function. We prove that it gives a Fourier transform of the product and then use meromorphic continuation to prove that Fourier transform is always given by a smooth function, hence the integral is well-defined. Then we check that the integral satisfies the properties we need: we can differentiate under integral sign and in certain cases we can integrate by parts with no boundary term.

To simplify notation, define 
\begin{equation} \label{eqn:EAxandMx}
E(\bx) = E(A,\bx):=e^{(\bx,A\bx)-\ovl{(\bx,A\bx)}}, \quad M(\bx)=M(s_1,\ldots,s_n,\bx) := \prod_{i=1}^n \abs{x_i}^{s_i}.
\end{equation}

\begin{lem}
\label{LemFourierTransformOfQuadraticExponent}
There exist fixed complex numbers $C_1,C_2$ such that the following hold:
\begin{enumerate}
\item
$\mc{F}e^{ix^2}=C_1e^{-i\frac{1}{4}y^2}$;
\item
Suppose that $A$ is invertible complex matrix, $\bz\in\CN^n$. Then \[\mc{F}e^{(\bz,A\bz)-\ovl{(\bz,A\bz)}}=\frac{C_2}{\abs{\det A}}e^{\ovl{(\bw,A^{-1}\bw)}-(\bw,A^{-1}\bw)}.\]
The second proposition can be rewritten as $\mc{F}E(A,\bx)=\frac{C_2}{\abs{\det A}}E(-A^{-1},\bx)$.
\end{enumerate}
\end{lem}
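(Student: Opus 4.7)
The plan is to compute each Fourier transform by approximating the oscillatory integrand by an absolutely convergent Gaussian and then passing to a limit in $\mc{S}'$; part~(2) will then be reduced to a product of one-dimensional Fresnel integrals via diagonalization and part~(1). For part~(1), set $f_{\eps}(x) := e^{(i-\eps)x^2}$ for $\eps > 0$; this lies in $\mc{S}(\RN)$ and its Fourier transform can be computed by completing the square:
\[ \mc{F}f_{\eps}(y) = \sqrt{\tfrac{\pi}{\eps - i}}\, e^{-y^2/(4(\eps - i))} \]
(principal branch). As $\eps \to 0^+$, $f_{\eps} \to e^{ix^2}$ in $\mc{S}'$ by dominated convergence, since $\abs{f_{\eps}(x) h(x)} \leq \abs{h(x)}$ for every $h \in \mc{S}$. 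Continuity of $\mc{F}$ on $\mc{S}'$ and continuity of the right-hand side in $\eps$ then give part~(1) with $C_1 = \sqrt{i\pi}$. Alternatively, Proposition~\ref{PropDecentConditionalFourierIsSchwartsFourier} can be applied directly once one controls $I_n(y) = \int_{-n}^n e^{ix^2 + ixy}\,dx$ by integrating by parts on $\abs{x} \geq 1$ to obtain uniform boundedness and uniform convergence on compact sets.

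For part~(2), I would first reduce to the case where $A$ is complex symmetric (only the symmetric part contributes to $(\bz, A\bz)$), and decompose $A = B + iC$ with $B, C$ real symmetric. Identifying $\CN^n \cong \RN^{2n}$ via $\bx = \bu + i\bv$, a direct expansion gives
\[ E(A, \bx) = e^{iQ(\bu,\bv)}, \quad Q(\bu,\bv) = 2(\bu,\bv)^T M (\bu,\bv), \quad M := \begin{pmatrix} C & B \\ B & -C \end{pmatrix}. \]
Factoring $M = \begin{pmatrix} 0 & I \\ I & 0 \end{pmatrix}\begin{pmatrix} B & -C \\ C & B \end{pmatrix}$, where the second factor is the realification of $A \in \GL_n(\CN)$ with determinant $\abs{\det A}^2$, yields $\abs{\det M} = \abs{\det A}^2$; in particular $M$ is invertible. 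Diagonalizing $N := 2M$ by an orthogonal change of variables on $\RN^{2n}$ and using the linear change of variables formula for the Fourier transform, $\mc{F}e^{iQ}$ factors as a product of one-dimensional Fresnel integrals. Applying part~(1) (and its conjugate $\mc{F}e^{-ix^2} = \overline{C_1}\,e^{iy^2/4}$) to each factor and reassembling, one obtains
\[ \mc{F}E(A,\cdot)(\bw) = \frac{C_\sigma}{\abs{\det A}}\, e^{-i\bw^T N^{-1}\bw/4}, \]
with $C_\sigma$ depending only on $n$ (since the signature of $N$ is $(n,n)$, visible from the $\begin{pmatrix} 0 & I \\ I & 0 \end{pmatrix}$ factorization).

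To match the complex presentation in the statement, I would translate the real quadratic form $\bw^T N^{-1}\bw/4$ back to complex notation using the algebraic identity
\[ M^{-1} = M_{\overline{A^{-1}}}, \quad \text{where} \quad M_{A'} := \begin{pmatrix} C' & B' \\ B' & -C' \end{pmatrix} \text{ for } A' = B' + iC', \]
which follows from expanding $A A^{-1} = I$ into real and imaginary parts ($BB'' - CC'' = I$, $BC'' + CB'' = 0$ for $A^{-1} = B'' + iC''$) and a short direct matrix computation. Combining this identity with the Fourier convention on $\CN^n$ implicit in the statement (whose kernel can be read off as $\exp(2[(\bx,\bw) - \overline{(\bx,\bw)}])$) lets one rewrite the exponent in the previous display as $\overline{(\bw, A^{-1}\bw)} - (\bw, A^{-1}\bw)$, yielding part~(2) with a specific constant $C_2$. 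The hard part will be precisely this bookkeeping: the transition from the Fresnel answer in real coordinates to the complex expression $E(-A^{-1}, \bw)$ hinges on the identity $M^{-1} = M_{\overline{A^{-1}}}$ together with a careful accounting of signs and factors coming from the Fourier convention and from the rescaling $N = 2M$. All other steps are routine applications of Gaussian and Fresnel calculus.
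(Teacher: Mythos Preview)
Your proposal is correct, but for part~(2) it takes a genuinely different route from the paper. The paper first performs the complex-linear substitution $\bz \mapsto \sqrt{A}^{-1}\bz$ (whose real Jacobian produces the factor $\abs{\det A}^{-1}$) to reduce to $A = I$, and then uses the elementary identity
\[
(\bz,\bz)-\ovl{(\bz,\bz)} = 2i\sum_j x_jy_j = \tfrac{i}{2}\sum_j\big((x_j+y_j)^2-(x_j-y_j)^2\big)
\]
to factor $E(I,\bz)$ into $2n$ one-dimensional Fresnel integrals to which part~(1) applies directly. No matrix inversion identity is needed: the exponent on the transform side is automatically $E(-I,\bw)$, and undoing the change of variables gives $E(-A^{-1},\bw)$ for free. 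Your approach instead keeps $A$ general, encodes the exponent as a real quadratic form with matrix $M=\begin{psmallmatrix} C & B \\ B & -C \end{psmallmatrix}$, and then has to establish $M^{-1}=M_{\overline{A^{-1}}}$ to translate the Fresnel answer back into complex notation. This works, and it has the mild advantage of avoiding the existence of $\sqrt{A}$, but it trades that for the bookkeeping you yourself flag as the hard part. One small point: your justification that the signature of $M$ is $(n,n)$ ``visible from the factorization'' is not quite right as stated, since $M$ is written as a product of a symmetric matrix and a non-symmetric one; a clean argument is that conjugation by $\begin{psmallmatrix} 0 & I \\ -I & 0 \end{psmallmatrix}$ sends $M$ to $-M$, so the nonzero eigenvalues of $M$ come in $\pm$ pairs.

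For part~(1), your $\eps$-regularization argument is a standard alternative to the paper's direct use of Proposition~\ref{PropDecentConditionalFourierIsSchwartsFourier} via completing the square $e^{ix^2+ixy}=e^{-iy^2/4}e^{i(x+y/2)^2}$; you even mention the latter as an option. Both are fine.
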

\begin{proof}

Let us first prove the first statement. It is well-known that $\int dx \, e^{ix^2}$ is conditionally convergent: We have
\[ \int_{a}^{\infty} dx \, e^{ix^2} \to 0, \quad \int_{-\infty}^a dx\, e^{ix^2} \to 0 \]
as $a \to \infty$. It is also not hard to see that $\int_a^b e^{ix^2}dx$ is bounded for all $a,b$. Now, note that
\[ e^{ix^2+ixy}=e^{-\frac{iy^2}{4}}e^{i(x+\frac{y}{2})^2} . \] 
We deduce that $\int_{-n}^n e^{ix^2+ixy}$ is uniformly bounded. Moreover, for a compact set $[-n,n]$, 
\[ \int_{-n}^n dx\, e^{ix^2+ixy} =\int_{-n}^n dx\,  e^{-\frac{iy^2}{4}}e^{i(x+\frac{y}{2})^2}=e^{-\frac{iy^2}{4}}\int_{-n+\frac{y}{2}}^{n+\frac{y}{2}}dx\, e^{ix^2},  \]

Let $S\subset [-N,N]$ be a compact set, $y\in S$. Then 
\[ \abs{ \int_{-n}^n dx\, e^{ix^2+ixy}-e^{-\frac{iy^2}{4}} }=e^{-\frac{iy^2}{4}} \abs{ \int_{-\infty}^{-n+\frac{y}{2}} dx\, e^{ix^2}+\int_{n+\frac{y}{2}}^{\infty} dx\, e^{ix^2} } \leq 2\abs{\int_{n-N}^\infty dx\, e^{ix^2}}.\]
The right hand side tends to zero as $n$ tends to infinity. Hence the limit 
\[ \lim_{n \to \infty} \int_{-n}^n dx\,  e^{ix^2+ixy}=e^{-\frac{iy^2}{4}} \] 
is uniform on compact sets. It follows that $f(x)=e^{i(\bx, \bx)}$ and $K_n=[-n,n]^m$ satisfy the conditions of Proposition~\ref{PropDecentConditionalFourierIsSchwartsFourier} and the limit is $g(\by)$ is a constant times $e^{-i\frac{1}{4}(\by,\by)}$.

For the second statement, make the coordinate change $\bz \to \sqrt{A}^{-1}\bz$. It then suffices to show the result for $A$ equal to the identity. Let $\bz=(z_1,\ldots,z_n)$, $z_j=x_j+iy_j$. In this case, 
\[ (\bz,\bz)-\ovl{(\bz,\bz)}=2i\sum_{j = 1}^n x_jy_j=\frac{i}{2}\sum_{j = 1}^n (x_j+y_j)^2-(x_j-y_j)^2. \]
Now, let $\bw = (w_1, \cdots, w_n)$, where $w_j = a_j + i b_j$. Reasoning similarly to the proof of the first statement, we find that
\[
\mc{F}e^{\frac{i}{2}\sum_{j = 1}^n (x_j+y_j)^2-(x_j-y_j)^2}
 =C_2 e^{-\frac{i}{2}\sum_{j = 1}^n (a_j+b_j)^2-(a_j-b_j)^2} \]
 \[ = C_2 e^{-2i\sum a_jb_j}=C_2e^{\ovl{(\bw,\bw)}-(\bw,\bw)}. \]
for some absolute constant $C_2$. This finishes the proof.
\end{proof}

\begin{lem}
\label{LemAbsUSIsMeromorphicDistribution}
Let $\Re s>-2$, $\mc{S}=\mc{S}(\CN)$, $f\in\mc{S}$. Then $(\abs{u}^s,f)$ is well-defined and gives meromorphic function in $s$, holomorphic over $s \in \CN \setminus 2\ZN$.
\end{lem}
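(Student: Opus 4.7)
The plan is to establish well-definedness and holomorphy on the half-plane $\Re s > -2$ first, and then to produce the meromorphic continuation by subtracting a Taylor polynomial of $f$ at the origin. Throughout, I would work in polar coordinates $u = re^{i\theta}$, in which $(\abs{u}^s, f) = \int_0^{2\pi}\!\int_0^\infty r^{s+1} f(r\cos\theta, r\sin\theta)\,dr\,d\theta$, so the only integrability issue lies near $r = 0$.

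For the well-definedness part I would split the domain as $\{\abs{u}\le 1\}\cup\{\abs{u}>1\}$. On the outer region, the Schwartz decay of $f$ dominates $\abs{u}^{\Re s}$ by an integrable function, uniformly for $s$ in a compact set, so this piece contributes an entire function of $s$. On the inner region, boundedness of $f$ reduces matters to $\int_0^1 r^{\Re s + 1}\,dr$, which is finite precisely when $\Re s > -2$. Holomorphy on this half-plane then follows from Lemma~\ref{LemDiffIntAbsolute}: the $s$-derivative of the integrand is $\abs{u}^s \log\abs{u}\cdot f(u)$, which is dominated by an integrable function on any strip $\Re s \ge -2 + \delta$.

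The meromorphic continuation is the main content of the lemma. Fix $N \ge 0$ and write $f(x,y) = \sum_{a+b \le N} \tfrac{1}{a!\,b!}(\partial_x^a \partial_y^b f)(0)\,x^a y^b + R_N(x,y)$ with the standard Taylor remainder estimate $\abs{R_N(x,y)} \le C_N \abs{u}^{N+1}$ on $\abs{u}\le 1$. Splitting the pairing accordingly, the tail over $\abs{u}>1$ is entire, the remainder integral $\int_{\abs{u}\le 1} \abs{u}^s R_N\,d^2u$ is holomorphic for $\Re s > -N-3$ by the same differentiation-under-integral argument, and only the finite sum of polynomial pieces $\int_{\abs{u}\le 1} \abs{u}^s x^a y^b\,d^2u$ can contribute poles. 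Each such piece factorises in polar coordinates as $\bigl(\int_0^1 r^{s+a+b+1}\,dr\bigr)\bigl(\int_0^{2\pi} \cos^a\theta\,\sin^b\theta\,d\theta\bigr)$, whose angular factor vanishes whenever $a$ or $b$ is odd; when both are even, the radial factor contributes a simple pole $1/(s+a+b+2)$ at the negative even integer $s = -a-b-2$.

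Assembling these contributions, only pairs $(a,b)$ with both entries even produce singularities, so the poles land exactly on the negative even integers. Letting $N \to \infty$ yields a meromorphic continuation of $(\abs{u}^s, f)$ to all of $\CN$ that is holomorphic away from $-2\ZN_{>0}$, which implies the stated claim. The only genuine obstacle is bookkeeping the Taylor remainder and its dependence on the derivatives of $f$ at the origin; no analytic input beyond Lemma~\ref{LemDiffIntAbsolute} is needed.
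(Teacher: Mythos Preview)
Your argument is correct and complete: the splitting into $\{\abs{u}\le 1\}$ and $\{\abs{u}>1\}$, the Taylor subtraction on the inner disk, and the polar-coordinate computation of the polynomial pieces all go through as you describe. In particular your observation that only even-even monomials survive the angular integration, yielding simple poles exactly at the negative even integers, is the cleanest way to pin down the pole set.

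The paper takes a different route. Rather than Taylor-expanding $f$, it exploits the identity $\Delta\abs{u}^s = \tfrac{s^2}{4}\abs{u}^{s-2}$ and uses Green's theorem to move the Laplacian onto $f$, obtaining the recursion $(\abs{u}^{s-2},f) = \tfrac{4}{s^2}(\abs{u}^s,\Delta f)$, which extends the domain of holomorphy in strips of width~$2$. Your method has the advantage of making the residues explicit in terms of the derivatives of $f$ at the origin and showing transparently that the poles are simple (the paper's phrasing ``pole of order~$2$'' is an overcount, since $(\abs{u}^s,\Delta f)$ vanishes at $s=0$). The paper's method, on the other hand, packages the continuation as the closed formula $(\abs{u}^s,f) = c(s,m)(\abs{u}^{s+2m},\Delta^m f)$, and this is exactly the shape needed in the subsequent constructions of Section~\ref{sec:tempereddist}---see equations~\eqref{eqn:suppf1doesnotintersect0}--\eqref{eqn:suppf1doesintersect0} and Proposition~\ref{PropDefineg}. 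So while your proof stands on its own, the paper's version is tailored to feed directly into what follows.
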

\begin{proof}
Since $\Re s>-2$, the function $\abs{u}^s$ is locally integrable and has at most polynomial growth at infinity. Thus, the function $\abs{u}^s$ belongs in $\mc{S}'$. It remains to show that $(\abs{u}^s,f)$ is meromorphic with poles at $2\mathbb{Z}_{< 0}$. Since $f$ decays rapidly, we note that for any $V \subset \mathbb{C}$, 
\[ \int_{\CN} du d\ovl{u} \, \max_{s \in V} \abs{ \p_s^j \left(  \abs{u}^s f \right) } = \int_{\CN} du d\ovl{u} \, \max_{s \in V} \abs{  \abs{u}^s \log^j \abs{u} f } . \]
The integral on the right hand side is always absolutely convergent for $\Re s > -2$. We conclude by lemma~\ref{LemDiffIntAbsolute} that $(\abs{u}^s,f)$ is infinitely differentiable in $s$ (and thus holomorphic) when $\Re s> -2$.

Now, notice that 
\[ \Delta\abs{u}^s=\partial_u\partial_{\ovl{u}}(u\ovl{u})^{\frac{s}{2}}=\frac{s^2}{4}\abs{u}^{s-2}. \] 
Suppose that $\Re s>0$. Let $\eps>0$, $K=\{u\mid \eps<\abs{u}<\eps^{-1}\}$. Let us transform the following integral using Green's theorem:
\[ \int_K \Delta(\abs{u}^s)f dxdy. \] 
The boundary term on the circle $\abs{u}=\eps^{-1}$ clearly tends to zero, so we look at boundary term on the circle $\abs{u}=\eps$. For any two smooth functions $g,h$ on $\RN^2\setminus\{0,0\}$ with compact support for large enough $\eps$ we have \[\int_K (\Delta g) h =\int_K (g''_{xx}+g''_{yy})h dxdy=\int_{\abs{u}=\eps}(hg'_x dy-hg'_y dx)-\int_K (g'_xh'_x+g'_yh'_y)dxdy.\]

When $g=f$, $h=\abs{u}^s$, both $hg'_x$ and $hg'_y$ are continuous at zero, hence the boundary term goes to zero.

When $g =\abs{u}^s$ and $h = f$ the boundary term tends to zero when $\eps$ tends to zero. More explicitly, writing $g=\abs{u}^s=(x^2+y^2)^{\frac{s}{2}}$, we have 
\[ g'_x=sx\abs{u}^{s-2}, \quad f'_y=sy\abs{u}^{s-2} .\] 
Hence,
\[ hg'_x dy-hg'_ydx=sh\abs{u}^{s-2}(xdy-ydx) . \] 
If we parametrize the boundary circle using $x=r\cos\phi$, $y=r\sin\phi$ we get $xdy-ydx=r^2d\phi$. Note that $r^2\abs{u}^{s-2}=\abs{u}^s$. The function $h\abs{u}^{s-2}$ tends to zero when $\abs{u}$ tends to zero, hence this boundary term tends to zero when $\eps \to 0$. 

Thus, using Green's theorem twice allows us to write (for $\Re s>0$)
\[(\abs{u}^{s-2},f)=\frac{4}{s^2}(\Delta\abs{u}^s,f)=\frac{4}{s^2}(\abs{u}^s,\Delta f).\] 
This allows us to define $(\abs{u}^s,f)$ for $s$ such that $\Re s>-4$, $s\neq -2$. It also proves that $(\abs{u}^s,f)$ is holomorphic for $\Re s>-4$ except at $s = -2$, where it has a pole of order $2$. Continuing in this way, we see that $(\abs{u}^s,f)$ continues to a meromorphic function with poles at $-2, -4, \ldots$ and so on. 
\end{proof}

\begin{lem} \label{LemFourierTransformOfus}
Let $-2<\Re s<0$, $\abs{u}^s\in \mc{S}'(\CN)$. Then the Fourier transform $\mc{F}\abs{u}^s\in \mc{S}'$ is given by locally integrable function $C(s)\abs{u}^{-s-2}$, where 
\[ C(s) = 2^{-s} \pi \frac{ \Gamma \left(-\frac{s}{2} \right)  }{ \Gamma \left(  \frac{2+s}{2} \right) }. \]
The same formula holds if we meromorphically continue $s$ onto $s \in \CN\setminus 2\ZN$.
\end{lem}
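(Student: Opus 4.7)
The plan is to combine scaling symmetry with a Gaussian pairing and finish by meromorphic continuation.

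First, I would observe that on $\CN \cong \RN^2$ the distribution $\abs{u}^s$ for $-2 < \Re s < 0$ is radial and positively-homogeneous of degree $s$. Since $\mc{F}$ intertwines rotations and sends tempered distributions homogeneous of degree $d$ to tempered distributions homogeneous of degree $-d-2$ (with the $2$ coming from the real dimension of $\CN$), the Fourier transform $\mc{F}\abs{u}^s$ is radial and homogeneous of degree $-s-2$. For $-2 < \Re s < 0$ we likewise have $-2 < \Re(-s-2) < 0$, so $\abs{v}^{-s-2}$ defines a locally integrable tempered distribution of exactly the right homogeneity.

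The crucial step is a uniqueness assertion: any radial, positively-homogeneous tempered distribution of degree $d \in (-2,0)$ is a scalar multiple of $\abs{v}^d$. Indeed, its restriction to $\CN \setminus \{0\}$ is a continuous radial homogeneous function, which must equal $c\abs{v}^d$ there; the difference with $c\abs{v}^d$ is then a tempered distribution supported at $\{0\}$, hence a finite linear combination of derivatives of $\delta$, each of which is homogeneous of integer degree at most $-2$. Since $d \in (-2,0)$, the only such combination is zero. Consequently $\mc{F}\abs{u}^s = C(s)\abs{v}^{-s-2}$ for some scalar $C(s)$.

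To pin down $C(s)$, I would apply both sides to the Gaussian $\phi(u) = e^{-\abs{u}^2}$ via the identity $(\mc{F}\abs{u}^s,\phi) = (\abs{u}^s,\mc{F}\phi)$. The Fourier transform of a Gaussian is again a Gaussian (a variant of Lemma~\ref{LemFourierTransformOfQuadraticExponent}, or a direct computation), so both pairings reduce to radial integrals of the shape $\int_0^\infty r^{\alpha}e^{-\beta r^2}r\,dr$, which the substitution $t = \beta r^2$ converts into $\Gamma$-function values. Comparing the two sides yields the claimed formula $C(s) = 2^{-s}\pi\Gamma(-s/2)/\Gamma((s+2)/2)$.

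Finally, for general $s \in \CN\setminus 2\ZN$, I would invoke Lemma~\ref{LemAbsUSIsMeromorphicDistribution}: both $s \mapsto \mc{F}\abs{u}^s$ and $s \mapsto C(s)\abs{v}^{-s-2}$ are meromorphic families of tempered distributions (the former because $\mc{F}$ is continuous on $\mc{S}'$, the latter by the same lemma applied to the exponent $-s-2$ together with the meromorphicity of $C(s)$), and they agree on the open strip $-2 < \Re s < 0$; analytic continuation then forces them to agree on the whole common domain of meromorphicity. The principal obstacle I expect is the uniqueness step above, as one must rule out contributions supported at the origin; once that is in hand, the constant-matching is a routine Gaussian computation and the extension is standard meromorphic continuation.
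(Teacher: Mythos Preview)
Your argument is correct. The homogeneity-plus-rotation-invariance reduction, the exclusion of $\delta$-type contributions by comparing homogeneity degrees, the Gaussian pairing to fix $C(s)$, and the meromorphic extension via Lemma~\ref{LemAbsUSIsMeromorphicDistribution} together constitute the standard proof of this identity. One small point worth tightening: when you assert that the restriction of a radial homogeneous tempered distribution to $\CN\setminus\{0\}$ is automatically a \emph{function} equal to $c\abs{v}^d$, you are implicitly using that rotation-invariant homogeneous distributions on $\RN^2\setminus\{0\}$ are classified by their ``angular part'' on $S^1$, and that the only rotation-invariant distribution on $S^1$ is a constant. This is easy, but stating it makes the uniqueness step airtight.

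As for comparison with the paper: the paper does not supply its own proof at all --- it simply cites Gelfand--Shilov \cite{gelfand64} for both the definition of $r^\lambda$ as a distribution and the Fourier transform formula. Your write-up is therefore strictly more self-contained than what appears in the text, and is in fact essentially the classical argument that Gelfand--Shilov give. So there is no divergence of method to discuss; you have filled in what the authors chose to outsource.
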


\begin{proof}
This formula is quoted in Gelfand-Shilov, p. 363, and the relevant function $r^{\lambda}$ is defined in Sec. 1.3.9, p. 71 of \cite{gelfand64}. It follows from the computations on p. 73 that our definition is the same; the proof is in Sec. 2.3.3, p. 192.
\end{proof}

Now we explicitly construct a Fourier transform of
\[ f(\bx) = \left[ \prod_{i=1}^n \abs{x_i}^{s_i} \right] e^{(\bx,A\bx)-\ovl{(\bx,A\bx)}}   \] 
in the case when all $s_i$ satisfy $\Re s_i<-2$.

How would we go about constructing this Fourier transform? A natural answer would be a convolution of 
\[ \prod C(s_i)\abs{y_i}^{-2-s_i} \quad \text{and} \quad \frac{C_2}{\abs{\det A}}e^{\ovl{(\by,A^{-1}\by)}-(\by,A^{-1}\by)} \] 
in some sense. Below, we will define this convolution in such a way to get function smooth in $\by,A$ and meromorphic in the $s_i$.

Let $f$ be a smooth function on $\CN = \RN^2$ bounded at infinity. Choose a partition of unity $1=\rho_1+\rho_2$, where $\rho_1$ has compact support and the support of $\rho_2$ does not contain zero. For $\Re s<-2$, we can define the integral of $\abs{u}^s f(u)$ over $\mathbb{C}$ as follows. Let $f_i=f\rho_i$. Then $\int_{\CN} du d\ovl{u} \, |u|^s f_2(u) $ is convergent. We now deduce the following fact from the proof of Lemma~\ref{LemAbsUSIsMeromorphicDistribution}: When the support of $f_1$ does not intersect with zero for any positive integer $m$ we have 
\begin{equation} \label{eqn:suppf1doesnotintersect0}
    \int_{\CN} du d\ovl{u} \, f_1(x)\abs{u}^s=c(s,m)\int_{\CN} du d\ovl{u} \, (\Delta^m f(u))\abs{u}^{s+2m} ,
\end{equation}
\[  c(s,m)=\frac{4^m}{s^2(s+2)^2\cdots (s+2m-2)^2} . \]
When the support of $f$ contains zero, the left-hand side is not well-defined because $\Re s < -2$, but we can use the right-hand side as a definition:
\begin{equation} \label{eqn:suppf1doesintersect0}
    \int_{\CN} du d\ovl{u} \,  f_1(u)\abs{u}^s := c(s,m)\int_{\CN}  du d\ovl{u} \, (\Delta^m f_1 (u) )\abs{u}^{s+2m} ,
\end{equation}
where we choose $m$ so that $\Re s+2m>-2$. It follows from the computations with Green's theorem in the proof of Lemma~\ref{LemAbsUSIsMeromorphicDistribution} that the result does not depend on the choice of $m$. We also see that the result does not depend on the choice of $\rho_1,\rho_2$.

We can similarly define the integral over $\CN^n$ of $f(\bx)\prod_{i = 1}^n \abs{x_i}^{s_i}$ when $\Re s_i<-2$ for all $i$: Consider the sum 
\[ \sum_{k_i\in\{1,2\}}f(\bx)\prod \rho_{k_i}(x_i)\abs{x_i}^{s_i}, \] 
and for each term use \eqref{eqn:suppf1doesintersect0} integrate over the pieces containing a factor of $\rho_1(x_i)$. 

\begin{prop} \label{PropDefineg}
Suppose that $s_1, \cdots, s_n \in \CN$ such that $\Re s_i>0$. Let 
\[ g(\by)=\int_{\CN^n} d^n \bx d^n \ovl{\bx}  \,  E( -A^{-1},\bx + \by ) \prod_{i = 1}^n \abs{x_i}^{-2-s_i} , \] 

where we can use the construction above because $E(-A^{-1},\bx + \by)$ is smooth and has absolute value one. Then $g$ is smooth in $\by$, has $\lfloor\tfrac12 \min_i\Re s_i\rfloor-10$ partial derivatives in $A$, is holomorphic in the $s_i \in \CN \setminus 2\ZN$. The partial derivatives in $A$ belong to $\mc{S}'$. Furthermore, 
\[ \mc{F} \left[ e^{(\bx,A\bx)-\ovl{(\bx,A\bx)}}\prod_{i = 1}^n \abs{x_i}^{s_i} \right] = \frac{C_2}{\abs{\det A}} \left( \prod_{i = 1}^n C(s_i) \right) g (\by) . \]
 
\end{prop}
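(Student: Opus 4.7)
My plan is to split the proof into two parts: (a) the analytic properties of $g$ --- smoothness in $\by$, differentiability in the entries of $A$, $\mc{S}'$ membership of those $A$-derivatives, and holomorphy in the $s_i$ --- which I would obtain directly from the regularization construction given immediately before the proposition, together with Lemma \ref{LemDiffIntAbsolute}; and (b) the Fourier-transform identity, which is the main analytic content and which I would establish first on a strip where everything is absolutely convergent and then extend by analytic continuation in the $s_i$.

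For (a), well-definedness comes from applying the Green's-theorem identity \eqref{eqn:suppf1doesintersect0} separately in each variable $x_i$ a number $m_i$ of times: this rewrites $g(\by)$ as the rational prefactor $c(s_1,m_1)\cdots c(s_n,m_n)$ times the absolutely convergent integral of $\Delta_{x_1}^{m_1}\cdots\Delta_{x_n}^{m_n}[E(-A^{-1},\bx+\by)]\prod_i |x_i|^{-2-s_i+2m_i}$. Since $|E|\equiv 1$ and each $\bx$-derivative of $E$ produces a polynomial in $\bx+\by$ times $E$, this integrand is absolutely integrable once the $m_i$ are chosen large enough. Smoothness in $\by$ and $\ell$-fold differentiability in the entries of $A$ then follow from Lemma \ref{LemDiffIntAbsolute}: a $\by$-derivative multiplies the integrand by a degree-one polynomial in $\bx+\by$, and an $A_{jk}$-derivative by a degree-two polynomial, so after $\ell$ $A$-derivatives the polynomial factor has degree $\leq 2\ell$ in $\bx$, which the regularization still absorbs provided $\ell<\tfrac{1}{2}\min_i\Re s_i$ (with a fixed buffer giving the stated $-10$). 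The same degree bound translates to polynomial growth of these $A$-derivatives in $\by$, yielding the $\mc{S}'$ claim, and holomorphy in each $s_i$ on $\CN\setminus 2\ZN$ follows by applying Lemma \ref{LemDiffIntAbsolute} to $\partial/\partial s_i$ and $\partial/\partial\bar{s_i}$ on the regularized integrand; the only poles enter through the rational prefactors $c(s_i,m_i)$.

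For the Fourier identity I would first work on the open strip $-2<\Re s_i<0$, on which both $|x_i|^{s_i}$ and $|x_i|^{-2-s_i}$ are locally integrable and no regularization is needed. For $h\in\mc{S}$, the absolutely convergent pairing
\[ \langle \mc{F}(E\cdot M), h\rangle = \int E(A,\bx)\prod_i|x_i|^{s_i}\,\mc{F}h(\bx)\, d^n\bx\, d^n\ovl{\bx} \]
can be manipulated by unfolding $\mc{F}h$, interchanging orders of integration, and applying Lemmas \ref{LemFourierTransformOfQuadraticExponent} and \ref{LemFourierTransformOfus} to recognize the result as $\frac{C_2}{|\det A|}\prod_i C(s_i)$ times the pairing of $h$ with the convolution $\bigl(E(-A^{-1},\cdot)*\prod_i|\cdot|^{-2-s_i}\bigr)(\by)$. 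After the change of variable $\bx\mapsto \bx+\by$, and using that $E(-A^{-1},\cdot)$ is even in its second argument, this convolution is precisely $g(\by)$. Both sides of the resulting identity are meromorphic in each $s_i$ on $\CN\setminus 2\ZN$ --- the left side by Lemma \ref{LemAbsUSIsMeromorphicDistribution} together with continuity of $\mc{F}$ on $\mc{S}'$, and the right side by the previous paragraph --- so they agree throughout $\{\Re s_i>0\}$ by uniqueness of analytic continuation.

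The main obstacle will be justifying the Fubini interchange on the strip: once one unfolds $\mc{F}h(\bx)=\int h(\by)e^{i(\bx,\by)}d\by\,d\ovl{\by}$, the joint integrand is not absolutely integrable in $(\bx,\by)$ because $|E(A,\bx)|\equiv 1$ provides no $\bx$-decay. The cleanest workaround is to insert a Gaussian regulator $e^{-\eps|\bx|^2}$ that renders everything absolutely convergent, carry out the interchange for $\eps>0$, and then take $\eps\to 0$; the passage to the limit is controlled by Proposition \ref{PropDecentConditionalFourierIsSchwartsFourier}, which identifies the conditionally convergent Fourier integral against $E$ with $\mc{F}E$ as computed in Lemma \ref{LemFourierTransformOfQuadraticExponent}. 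Once this one analytic point is dispatched, the rest of the argument is a direct invocation of Lemmas \ref{LemFourierTransformOfQuadraticExponent} and \ref{LemFourierTransformOfus} combined with standard analytic continuation in the $s_i$.
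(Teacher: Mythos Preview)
Your plan has the right architecture, but both halves share a gap: on $\CN=\RN^2$ the weight $|x_i|^{\alpha}$ is integrable at infinity only when $\Re\alpha<-2$, and this is where your regularization bookkeeping breaks down. In part (b), on the strip $-2<\Re s_i<0$ the exponent $-2-s_i$ has real part in $(-2,0)$, so $\prod_i|x_i|^{-2-s_i}$ is locally integrable at $0$ but \emph{not} integrable at infinity; since $|E(-A^{-1},\cdot)|\equiv 1$, the convolution you want to identify with $g$ diverges there, and ``no regularization is needed'' is simply false. Your Gaussian regulator addresses the Fubini interchange, not this divergence. The paper instead stays in the stated region $\Re s_i>0$, where the tail of $g$ converges and only the origin needs the partition-of-unity regularization, and proves a short lemma that $E(-A^{-1},\cdot)*h\in\mc{S}$ for every $h\in\mc{S}$; this lets one pair the regularized distribution $\prod_i|x_i|^{-2-s_i}$ with a genuine Schwartz function and match the result to $(g,h)$ computed through the partition-of-unity formula.

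In part (a), the same infinity issue undermines your smoothness-in-$\by$ argument. Each $\by$-derivative of $E(-A^{-1},\bx+\by)$ brings down a factor linear in $\bx+\by$, so after a single derivative the tail piece already behaves like $|x_j|\prod_i|x_i|^{-2-s_i}$ at infinity and diverges whenever $\Re s_j\le 1$; your global $\Delta^{m_i}$ rewrite (which also needs the partition of unity, or the boundary terms at infinity do not vanish) makes this worse, not better, since the Laplacians of $E$ contribute polynomial growth in all coordinates. The argument you give is exactly the one that limits the $A$-derivatives to $\lfloor\tfrac12\min_i\Re s_i\rfloor-10$, and it would yield only the same finite count for $\by$. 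The paper's remedy is a separate trick: translate the cutoffs to $\rho_k(x_i+y_i-y_i^*)$ and then shift $\bx\mapsto\bx-\by$, so that $E$ becomes $E(-A^{-1},\bx)$, independent of $\by$, and all $\by$-dependence moves into the factors $|x_i-y_i|^{-2-s_i+2m}$, which can be differentiated arbitrarily often by enlarging $m$.
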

\begin{proof}
Write partitions of unity $1 = \rho_1(x_i) + \rho_2(x_i)$ as before, where $\rho_1$ has compact support and $0 \notin \mathrm{supp} (\rho_2)$. Then, we may write 
\begin{multline*}
    g(\by)=\sum_{k_i\in{1,2}}\prod_{i: k_i=1}c(-2-s_i,m)\int d^m \bx d^m \ovl{\bx} \\ \,  \prod_{i = 1}^n \abs{x_i}^{-2-s_i} \prod_{i \colon k_i = 1} \abs{x_i}^{2m} \bigg(\prod_{i\colon k_i=1} \Delta_{x_i}^m \bigg)  \left[ f(\bx+\by) \prod_{i = 1}^n \rho_{k_i}(x_i)  \right] 
\end{multline*} 

where we take $m$ large enough. We can apply Lemma~\ref{LemDiffIntAbsolute} for $\p_{s_i} g$: if $k_i=2$ the integral at infinity converges, while if $k_i=1$ the integral at zero converges if we take $m$ large enough. Note that $c(-2-s_i,m)=\frac{1}{(-2-s_i)^2(-s_i)^2\cdots(2m-4-s_i)^2}$ is holomorphic on $\CN\setminus 2\ZN$. Hence $g$ has a complex derivative with respect to $s_i \notin \CN \setminus 2\ZN$, so $g$ is holomorphic in this region. We also see that $g$ has at most polynomial growth: the integrals at infinity are bounded, the integrals are bounded by constant times $\max(1,\abs{y}^{2nm})$.

Let us now try to differentiate $g$ by $y_i$ or $a_{ij}$. When we attempt to do this, we could possibly obtain a divergence at infinity. When taking a derivative with respect to $a_{ij}$, we take at least $\lfloor\tfrac12 \min_i\Re s_i\rfloor-10$ derivatives and the integral will still converge at infinity. When differentiating we get terms in $y$ of polynomial growth, hence the partial derivatives of $g$ also belong to $\mc{S}'$.

Let us now show that $g(\by)$ is smooth in $\by$. To do this, replace $\rho_i(x_i)$ to $\rho_i(x_i+y_i-y_i^*)$: in a neighborhood of $y_i^*$. The functions $\rho_2$ are chosen such that they still have support that does not intersect with zero. We therefore find
\begin{multline*}
    g(\by)=\sum_{k_i\in{1,2}}\prod_{i: k_i=1}c(-2-s_i,m)\int d^m \bx d^m \ovl{\bx} \\ \,  \prod_{i = 1}^n \abs{x_i}^{-2-s_i} \prod_{i \colon k_i = 1} \abs{x_i}^{2m} \bigg( \prod_{i\colon k_i=1} \Delta_{x_i}^m \bigg) \left[ f(\bx+\by) \prod_{i = 1}^n \rho_{k_i}(x_i+y_i-y_i^*)  \right] 
\end{multline*}
Shifting the integration variable $\bx \to \bx - \by$, we get
\begin{multline*}
    g(\by)=\sum_{k_i\in{1,2}}\prod_{i\colon k_i=1}c(-2-s_i,m)\int d^m \bx d^m \ovl{\bx} \\ \prod_{i = 1}^n \abs{x_i-y_i}^{-2-s_i} \prod_{i\colon k_i=1} \abs{x_i-y_i}^{2m} \bigg( \prod_{i\colon k_i=1}  \Delta_{x_i}^m \bigg) \left[ f(\bx)\prod_{i = 1}^n \rho_{k_i}(x_i-y_i^*)  \right] 
\end{multline*}

The above expression has $2m-s_i-4$ partial derivatives in $y_i$: There are no issues at infinity, and we get divergence at zero only when $\abs{x_i-y_i}$ has negative degree. Since we can take any large enough $m$ this proves that $g$ is smooth in $y_i$. 

Now, take $\bx \to -\bx$ in the integral above and note that $f(\bx-\by)=f(\by-\bx)$. We find that 
\[ g(\bx)=\int d^m \bx d^m \ovl{\bx} \, f(\bx-\by) \prod\abs{x_i}^{-2-s_i} . \]
For $h\in \mc{S}$, we have
\begin{multline*}
(g,h)=\sum_{k_i\in \{1,2\}}\prod_{i\colon k_i=1}c(s_i,m)  \int d^m \by d^m \ovl{\by} \int d^m \bx d^m  \ovl{\bx} \\
\prod_{i = 1}^n \abs{x_i}^{-2-s_i} \prod_{i\colon k_i=1}\abs{x_i}^{2m} \bigg( \prod_{i\colon k_i=1}\Delta_{x_i}^m \bigg) \left[  f(\bx-\by) \prod_{i = 1}^n \rho_{k_i}(x_i) \right] h(\by).
\end{multline*}
Integrating over $\by$, we get
\begin{multline*} \sum_{k_i\in \{1,2\}}\prod_{i\colon k_i=1}c(s_i,m)  \int d^m \by d^m \ovl{\by} \int d^m \bx d^m  \ovl{\bx} \\
\prod_{i = 1}^n \abs{x_i}^{-2-s_i} \prod_{i\colon k_i=1}\abs{x_i}^{2m} \bigg( \prod_{i\colon k_i=1}\Delta_{x_i}^m \bigg) \left[  f(\bx-\by) \prod_{i = 1}^n \rho_{k_i}(x_i) \right] h(\by)
\end{multline*}
\begin{multline*}
= \sum_{k_i\in \{1,2\}}\prod_{i\colon k_i=1}c(s_i,m)  \int d^m \bx d^m  \ovl{\bx} \\
\prod_{i = 1}^n \abs{x_i}^{-2-s_i} \prod_{i\colon k_i=1}\abs{x_i}^{2m}\bigg(\prod_{i\colon k_i=1}\Delta_{x_i}^m \bigg) \left[  (f*h)(\bx) \prod_{i = 1}^n \rho_{k_i}(x_i)\right].
\end{multline*}

We now note that $f*h=\mc{F}^{-1}(\mc{F}f\mc{F}h)$ is a rapidly decreasing function:

\begin{lem}
Suppose that $f\in\mc{S}$ is a function such that $f,\mc{F}f$ are bounded. Then for any $h\in \mc{S}$ we have $f*h\in \mc{S}$, $f*h=\mc{F}^{-1}(\mc{F}f\mc{F}h)$.
\end{lem}
\begin{proof}
Function $f*h(\by)=\int d^n \bx \, f(x)h(y-x)$ is defined by an absolutely convergent integral. It is also bounded, hence it belongs to $\mc{S}'$. For $g\in\mc{S}$ we use Fubini theorem to obtain
\begin{multline*}
(f*h,\mc{F}g)=\int d^n \by d^n \ovl{\by} \, f*h(\by)(\mc{F}g)(\by)= \int d^n\bx d^n \ovl{\bx} d^n\by d^n \ovl{\by} d^n\bz d^n \ovl{\bz} \,  f(\bx)h(\by-\bx)e^{i(\by,\bz)}g(\bz)\\
= \int d^n\bx d^n \ovl{\bx} d^n\bz d^n \ovl{\bz} \, f(\bx)\mc{F}h(\bz)e^{i(\bx,\bz)}g(\bz) = \int 
d^n \bx d^n \ovl{\bx} \, f(\bx)\mc{F}(\mc{F}h\cdot g)(\bx)\\
= (f,\mc{F}(\mc{F}h\cdot g))=(\mc{F}f,\mc{F}h\cdot g)=(\mc{F}f\mc{F}h,g),
\end{multline*}
hence $\mc{F}(f*h)=\mc{F}f\mc{F}h$ as elements of $\mc{S}'$. Since $\mc{F}f$ is bounded and $\mc{F}h\in \mc{S}'$, their product belongs to $\mc{S}$, hence $\mc{F}(f*h)$ also belongs to $\mc{S}$. Function $f*h$ and $\mc{F}^{-1}\mc{F}h$ give the same element of $\mc{S}'$, hence $f*h=\mc{F}^{-1}\mc{F}(f*h)=\mc{F}^{-1}(\mc{F}f\mc{f}h)$ belongs to $\mc{S}$.
\end{proof}

Hence for each $(k_1,\ldots,k_n)$ we can integrate by parts using $\Delta_j^m$ for all $j$ such that $k_j=2$ to get
\begin{multline*}
\sum_{k_i\in \{1,2\}}\prod_{i\colon k_i=1}c(s_i,m)  \int d^m \bx d^m  \ovl{\bx} \\
\prod_{i = 1}^n \abs{x_i}^{-2-s_i} \prod_{i\colon k_i=1}\abs{x_i}^{2m}\bigg(\prod_{i\colon k_i=1}\Delta_{x_i}^m \bigg) \left[  (f*h)(\bx) \prod_{i = 1}^n \rho_{k_i}(x_i)\right]
\end{multline*}
\begin{multline*}
= \sum_{k_i\in \{1,2\}}\prod_{i = 1}^nc(s_i,m)  \int d^m \bx d^m  \ovl{\bx} \\
\prod_{i = 1}^n \abs{x_i}^{-2-s_i} \prod_{i 
= 1}^n\abs{x_i}^{2m}\bigg(\prod_{i = 1}^n\Delta_{x_i}^m \bigg) \left[  (f*h)(\bx) \prod_{i = 1}^n \rho_{k_i}(x_i)\right]
\end{multline*}

Now we can sum over all $(k_1,\ldots,k_n)$ to get 

\begin{multline*}
\sum_{k_i\in \{1,2\}}\prod_{i = 1}^n c(s_i,m)  \int d^m \bx d^m  \ovl{\bx} \\
\prod_{i = 1}^n \abs{x_i}^{-2-s_i} \prod_{i = 1}^n \abs{x_i}^{2m}\bigg(\prod_{i}\Delta_{x_i}^m \bigg) \left[  (f*h)(\bx) \prod_{i = 1}^n \rho_{k_i}(x_i)\right]=\\
\prod_{i =1}^n c(s_i,m)\int d^m \bx d^m  \ovl{\bx}\prod_{i = 1}^n \abs{x_i}^{-2-s_i} \prod_{i =1}^n\abs{x_i}^{2m}\bigg(\prod_{i = 1}^n \Delta_{x_i}^m \bigg)(f*h)(\bx)=\\
\prod_{i = 1}^n c(s_i,m) \left( \prod_{i = 1}^n \abs{x_i}^{2m-2-s_i},(f*h)(\bx) \right) =\left(\prod_{i = 1}^n \abs{x_i}^{-2-s_i}, f*h \right)
\end{multline*}

Hence, we have proven that 
\begin{equation}
\label{EqGHisFConvH}
(g,h)= \left(\prod_{i=1}^n \abs{x_i}^{-2-s_i},f*h \right).
\end{equation}
Recall that $M(\bx) = \abs{x_1}^{s_1} \cdots \abs{x_n}^{s_n}$. We now compute $\mc{F}\left[ E(A,\bx)M(\bx) \right]$. We have 
\[ \left( \mc{F}(E(A,\bx) M(
\bx) ),h \right)= \left(E(A,\bx) M(\bx) ,\mc{F}h \right)=(M(\bx) ,E(A, \bx)\mc{F}h) .\]
By Lemma \ref{LemFourierTransformOfQuadraticExponent}, we find that 
\[  (M(\bx) ,E(A,x)\mc{F}h)  = \frac{C_2}{\abs{\det A}} \left( M(\bx),\mc{F}(E(A^{-1},x)*h) \right)= \frac{C_2}{\abs{\det A}} (M(\bx) ,\mc{F}(f*h)) . \]
Finally, by Lemma \ref{LemFourierTransformOfus}, we find that  
\begin{multline*}
\frac{C_2}{\abs{\det A}} (M(\bx) ,\mc{F}(f*h)) = \frac{C_2}{\abs{\det A}} \left(\mc{F}\left[\prod_{i = 1}^n\abs{x_i}^{s_i} \right],f*h \right) \\ = \frac{C_2}{\abs{\det A}}\prod_{i = 1}^n C(s_i)
\left(\prod_{i = 1}^n \abs{x_i}^{-2-s_i},f*h \right) .     
\end{multline*} 
Comparing this with~\eqref{EqGHisFConvH}, we get 
\[ \mc{F}\left[ E(A,\bx) M(\bx) \right] =  \frac{C_2}{\abs{\det A}} \left( \prod_{i = 1}^n C(s_i) \right) g(\by) .
 \]
\end{proof}

\begin{prop}
Let $s_1,\ldots,s_n\in\CN\setminus 2\ZN$, and recall that $E(A,\bx)M(\bx) $ defines an element of $\mc{S'}$ as above.  Then $g(\by) := \mc{F}(E(A,\bx) M(\bx) )$ is given by a function smooth in $\by$ and $A$ and holomorphic over $s_i \in \CN\setminus 2\ZN$.
\end{prop}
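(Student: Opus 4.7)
The plan is to extend Proposition~\ref{PropDefineg} (which establishes the result for $\Re s_i>0$) to all $s_i\in\CN\setminus 2\ZN$ via a distributional shift relating $\mc{F}(EM)$ to $\mc{F}(E\tilde M)$, where $\tilde M$ is obtained from $M$ by shifting each $s_i$ up by $2m_i$ for sufficiently large integers $m_i$. This brings us into the regime of Proposition~\ref{PropDefineg}, and the relation between the two Fourier transforms will turn out to be given by an explicit differential operator whose coefficients are polynomials in $\by,A,\bar A$ and meromorphic in $s$. Thus all three required regularity properties---smoothness in $\by$, smoothness in $A$, and holomorphy in $s$---reduce to the corresponding properties of $\mc{F}(E\tilde M)$.

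Concretely, fix $N\in\ZN_{\geq 0}$ and pick positive integers $m_i$ so large that $\Re(s_i+2m_i)>2N+20$. Set $\tilde s_i=s_i+2m_i$ and $\tilde M(\bx)=\prod_i|x_i|^{\tilde s_i}$. Iterating the distributional identity $\Delta_{x_i}|x_i|^{s+2}=\tfrac{(s+2)^2}{4}|x_i|^s$ yields
\[ |x_i|^{s_i}=\gamma_i(s_i)\,\Delta_{x_i}^{m_i}|x_i|^{\tilde s_i},\qquad \gamma_i(s_i)=\prod_{k=1}^{m_i}\frac{4}{(s_i+2k)^2}, \]
with each $\gamma_i$ holomorphic on $\CN\setminus 2\ZN$. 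Since every $\bx,\bar\bx$-derivative of $E(A,\bx)=e^{(\bx,A\bx)-\ovl{(\bx,A\bx)}}$ is a polynomial in $\bx,A,\bar A$ times $E$, applying the Leibniz rule for $\Delta_{x_i}^{m_i}$ to each factor produces a finite expansion of tempered distributions
\[ E(A,\bx)M(\bx)=\sum_\alpha P_\alpha(\bx,A,\bar A,s)\,D_\bx^\alpha\bigl(E(A,\bx)\tilde M(\bx)\bigr), \]
where the $D_\bx^\alpha$ are partial differential operators in $\bx,\bar\bx$ of bounded order and the $P_\alpha$ are polynomials in $\bx$ and in the entries of $A,\bar A$, with coefficients meromorphic in $s$ and holomorphic away from $2\ZN$.

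Taking Fourier transforms, multiplication by $\bx$-polynomials becomes differentiation in $\by$ and each $D_\bx^\alpha$ becomes multiplication by a $\by$-polynomial, so
\[ \mc{F}(EM)(\by)=\sum_\alpha Q_\alpha(\by,A,\bar A,\p_\by,\p_{\bar\by},s)\,\mc{F}(E\tilde M)(\by) \]
for differential operators $Q_\alpha$ in $\by,\bar\by$ with coefficients polynomial in $\by,A,\bar A$ and meromorphic in $s$. Since $\Re\tilde s_i>2N+20>0$, Proposition~\ref{PropDefineg} applies to $\tilde s$ and realizes $\mc{F}(E\tilde M)(\by)=\tfrac{C_2}{|\det A|}\prod_iC(\tilde s_i)\,\tilde g(\by)$ as a function smooth in $\by$, of class $C^N$ in $A$, and holomorphic in $\tilde s$ (equivalently, in $s$). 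Substituting into the display above, $\mc{F}(EM)$ is represented by a function that is smooth in $\by$, of class $C^N$ in $A$, and holomorphic in $s\in\CN\setminus 2\ZN$; since $N$ was arbitrary, full smoothness in $A$ follows. The main obstacle I anticipate is the careful bookkeeping in the distributional algebra above---verifying that the meromorphic identity for $|x_i|^{s_i}$, multiplication by the smooth but non-Schwartz function $E(A,\bx)$, and the Leibniz rule combine consistently to give an equality in $\mc{S}'$. This is handled by pairing both sides against an arbitrary test function $\psi\in\mc{S}$ and applying Fubini together with the definition of $|x_i|^{s_i}$ from Lemma~\ref{LemAbsUSIsMeromorphicDistribution}, in the same style as in Proposition~\ref{PropDefineg}.
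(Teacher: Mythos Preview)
Your argument is correct and is essentially the same as the paper's: both shift $s_i\mapsto s_i+2m$ via the distributional identity $|x_i|^{s_i}=c\cdot\Delta_i^m|x_i|^{s_i+2m}$, reduce to Proposition~\ref{PropDefineg} for the shifted parameters, and express $\mc{F}(EM)$ as a differential operator in $\by$ (with coefficients polynomial in $\by,A,\bar A$ and holomorphic in $s$) applied to the smooth function $g_m$. The only cosmetic difference is that the paper carries out the computation by pairing against a test function $h$ from the outset (moving $\Delta^m$ onto $E\,\mc{F}h$ and writing $\Delta^m(E\,\mc{F}h)=E\,\mc{F}\mathbf{D}_m h$, so that $\mc{F}(EM)=\prod c(s_i,m)\,\mathbf{D}_m^*g_m$), whereas you do the equivalent Leibniz manipulation directly on the distribution before taking the Fourier transform; your $\sum_\alpha Q_\alpha$ is exactly the paper's $\mathbf{D}_m^*$.
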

\begin{proof}
We have $(\mc{F}(E(A,\bx)M(\bx),h)=(M(\bx),E(A,x)\mc{F}h)$. Thus, 
\[ ( M(\bx) ,E(A,\bx)\mc{F}h ) =\prod c(s_i,m)\left( \prod_{i = 1}^n \abs{x_i}^{s_i+2m},\prod\Delta_i^m(E(A,\bx)\mc{F}h) \right).\]

Note that for any partial differential operator $D$ with constant coefficients we have $D E(\bx)=P_D(\bx, \ovl{\bx}) E(\bx)$, where $P_D$ is a polynomial in $x,\ovl{x}$. Hence for any partial differential operator $D$ with constant coefficients we have \[D(E(\bx)\mc{F}h)=\sum_{D_1,D_2}D_1E(\bx)D_2\mc{F}h= E(\bx) \sum_{P_1,D_2} P_1(\bx, \ovl{\bx})  D_2\mc{F}h.\]

Here sum comes from the formula for the derivative of a product, $P_1=P_{D_1}$ is a polynomial such that $D_1E(\bx)=P_1 E(\bx)$. Hence $D(E(x)\mc{F}h)=E(x)\tilde{D}\mc{F}h$, where $\tilde{D}$ is a partial differential operator with polynomial coefficients. Since Fourier transform interchanges $x$ with $\partial_x$ (up to a constant factor), we get $\tilde{D}\mc{F}h=\mc{F}(\bold{D} h)$, where $\bold{D}$ is also a partial differential operator with polynomial coefficients. When $D=\prod_{i = 1}^n \Delta_i^m$, denote the corresponding $\bold{D}$ by $\bold{D}_m$. 

Hence, defining
\[ M_m(\bx) : = \prod_{i = 1}^n 
 \abs{x_i}^{s_i+2m},  \]
we find that 
\[\prod_{i = 1}^n c(s_i,m)\left(M_m(\bx) ,\prod_{i = 1}^n \Delta_i^m(E(A,x)\mc{F}h) \right)=\prod_{i = 1}^n c(s_i,m)(M_m(\bx),E(A,x)\mc{F}\bold{D}_mh).\]

We already know that 
\[ (M_m(\bx),E(A,\bx)\mc{F}h)=(g_m(\bx),h(\bx)), \]
where for $m$ sufficiently large we may write
\[ g_m(\by) = \int_{\CN^n} d^n \bx d^n \ovl{\bx}  \,  E( -A^{-1},\bx + \by ) \prod_{i = 1}^n \abs{x_i}^{-2-s_i - 2m} \]
as per Proposition \ref{PropDefineg}. Note that $g_m$ is smooth in $\by$, holomorophic on $s_i\in \CN\setminus 2\ZN$ and has at least $m-c$ partial derivatives in $a_{ij}$ for some constant $c > 0$ that does not depend on $m$. We thus find
\begin{multline*}
     \left[ \prod_{i = 1}^n c(s_i,m) \right] (M_m(\bx),E(A,x)\mc{F}\bold{D}_mh) \\ = \left[ \prod_{i = 1}^n c(s_i,m) \right] (g_m(\by),\bold{D}_m h(\by)) = \left[\prod_{i = 1}^n c(s_i,m)  \right] (\bold{D}_m^*g_m(\by),h(\by)),
\end{multline*}
hence 
\[\mc{F}(E(A,\bx)M(\bx) )= \left[ \prod_{i = 1}^n c(s_i,m) \right] \bold{D}_m^*g_m(\by)\]
for $m$ large enough. The right-hand side is holomorphic for $s_i  \in \CN\setminus 2\ZN$ and smooth in $\by$ and also has $m-c$ partial derivaties in $a_{ij}$. Since the left-hand side does not depend on $m$ we deduce that is smooth in $a_{ij}$.
\end{proof}







\begin{prop} \label{prop:diffoponfourier}
Let $D_A$ be a differential operator in $a_{ij}$. Then 
\[ D_A(\mc{F}(E(A,\bx) M(\bx) ))=\mc{F}(D_A(E(A,\bx)) M(\bx)). \]
\end{prop}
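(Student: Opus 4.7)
The plan is to test against an arbitrary $h \in \mc{S}$, unfold the Fourier transform via $(\mc{F}\phi, h) = (\phi, \mc{F}h)$, and reduce the statement to commuting $D_A$ with an integral. Both sides of the claimed identity are tempered distributions in $\by$ (in fact smooth functions, by the preceding proposition), so it suffices to prove
\[ (D_A \mc{F}(E(A,\bx) M(\bx)), h) = (\mc{F}(D_A(E(A,\bx)) M(\bx)), h) \]
for every $h \in \mc{S}$. The right-hand side rewrites immediately as $(D_A(E)\, M, \mc{F}h)$. For the left-hand side, the representation $\mc{F}(EM) = \prod_i c(s_i,m)\, \bold{D}_m^*\, g_m(\by)$ established in the previous proposition, together with the smoothness of $g_m$ in $A$ with estimates tempered in $\by$, lets me slip $D_A$ through the pairing:
\[ (D_A \mc{F}(EM), h) = D_A\,(\mc{F}(EM), h) = D_A\,(EM, \mc{F}h). \]
The entire claim thus reduces to the scalar equality $D_A\,(EM, \mc{F}h) = (D_A(E)\, M, \mc{F}h)$.

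For this, I would first assume $\Re s_i > -2$ for all $i$, so that $(EM, \mc{F}h)$ is represented by an absolutely convergent integral: $|E(A,\bx)| = 1$, the factor $M$ is locally integrable in each $x_i$, and $\mc{F}h \in \mc{S}$. Any constant-coefficient partial differential operator $D$ in the entries $a_{ij}$ acts on the exponential by $D E(A,\bx) = P_D(A,\bx,\bar\bx)\, E(A,\bx)$ for some polynomial $P_D$, so
\[ |D^k(E(A,\bx) M(\bx) \mc{F}h(\bx))| \leq C_k\, (1+|\bx|)^{N_k}\, |M(\bx)|\, |\mc{F}h(\bx)| \]
locally uniformly in $A$. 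The right-hand side is integrable because the Schwartz factor $\mc{F}h$ dominates any polynomial, and $M$ is locally integrable in each variable. Lemma~\ref{LemDiffIntAbsolute} then applies and yields $D_A(EM, \mc{F}h) = (D_A(E)\, M, \mc{F}h)$ in this regime.

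To extend to general $s \in \CN^n \setminus \bigcup_i \{s_i \in 2\ZN\}$ I would use analytic continuation. By the previous proposition, $\mc{F}(EM)$ is holomorphic in $s$ on this domain, whence so is the LHS. For the RHS, the identity $D_A(E) M = P_D(A,\bx,\bar\bx) \cdot EM$ combined with the fact that multiplication by a polynomial in $(\bx,\bar\bx)$ is Fourier-dual to a constant-coefficient differential operator in $(\by,\bar\by)$ with coefficients independent of $s$ gives $\mc{F}(D_A(E)\, M) = \widetilde{P}_D(A, \p_\by, \p_{\bar\by})\, \mc{F}(EM)$, and this is holomorphic in $s$ as well. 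Two $\mc{S}'$-valued holomorphic functions on a connected domain that agree on an open subset coincide, so the identity extends from $\{\Re s_i > -2 \text{ for all } i\}$ to the entire domain.

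The main obstacle is the first step: justifying the exchange of $D_A$ with the Schwartz pairing at the distributional level, which would be circular if done naively. The way out is precisely the explicit representation $\mc{F}(EM) = \prod c(s_i, m)\, \bold{D}_m^* g_m$ supplied by the preceding proposition, which reduces the exchange to differentiating the convergent integral defining $g_m$ under the integral sign via Lemma~\ref{LemDiffIntAbsolute}. Once this reduction is in place, the absolutely-convergent case and the meromorphic continuation step are both routine.
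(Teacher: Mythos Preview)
Your proposal is correct and follows essentially the same route as the paper: pair both sides with a test function $h$, pass the Fourier transform to $h$, and justify commuting $D_A$ through the resulting pairing via Lemma~\ref{LemDiffIntAbsolute} and the smoothness of $g$ in $A$ established just before. The only difference is cosmetic: for the final identity $D_A(EM,\mc{F}h)=(D_A(E)\,M,\mc{F}h)$ the paper asserts it directly for all $s$ (implicitly working through the Laplacian-based definition of the meromorphic continuation of $M$), whereas you first establish it in the absolutely convergent regime $\Re s_i>-2$ and then analytically continue --- both are valid and neither is materially shorter.
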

\begin{proof}
It is enough to check that both sides give the same element of $\mc{S}'$. It is not hard to see that for any $A^*$ there exists a neighborhood $U$ such that 
\[ \max_{A\in U} D_A\mc{F}(E(A,\bx) M(\bx) ) \]
is still an element of $\mc{S}'$. Indeed, $g(\by) = \mc{F}(E(A,\bx) M(\bx) ) $ is smooth in $A$ and $\by$, so we may safely take $D_A g(\by)$. We find
\[ D_A g(\by) = P(\by, \ovl{\by}) g(\by) \]
for some polynomial $P$; integrating $D_A g(\by)$ against any test function $h \in \mc{S}$ thus yields an absolutely convergent integral. 

Thus, we use Lemma~\ref{LemDiffIntAbsolute} to get 
\[ \left(D_A(\mc{F}(E(A,\bx)M(\bx))),h \right)=D_A(\mc{F}(E(A,\bx) M(\bx)),h)=D_A(E(A,\bx)M(\bx),\mc{F}h).\]
We then find 
\[D_A(E(A,\bx)M(\bx),\mc{F}h)=(D_A(E(A,\bx))M(\bx),\mc{F}h).\]
The proposition follows.
\end{proof}
\begin{cor}
Let $K=g(0)$ be the twisted Gaiotto kernel, $D_A$ be as above. Then $D_A K$ can be computed as an integral in the sense of tempered distributions 
\[  D_A K = \int d^n \bx d^n \ovl{\bx} \, (D_A(E(A,\bx)) M(\bx). \]
\end{cor}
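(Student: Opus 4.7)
The plan is to chain three facts established earlier in the paper: the definition of the integral of a tempered distribution as the Fourier transform evaluated at zero, the joint smoothness of $g(\by)=\mc{F}(E(A,\bx)M(\bx))$ in $A$ and $\by$, and the intertwining identity $D_A\circ\mc{F}=\mc{F}\circ D_A$ from Proposition~\ref{prop:diffoponfourier}. Since by definition $K = g(0)$ and $g(0)$ is the integral (in the sense of tempered distributions) of $E(A,\bx)M(\bx)$, the corollary reduces to the assertion
\[ D_A\bigl(g(0)\bigr) = \mc{F}\bigl(D_A(E(A,\bx))M(\bx)\bigr)(0), \]
after which the right-hand side is recognized as the tempered-distribution integral of $D_A(E(A,\bx))M(\bx)$ by the same definition.

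First I would move $D_A$ past the evaluation at $\by=0$. The preceding proposition proved that $g(\by)$ is jointly smooth in the entries of $A$ and in $\by$; in particular, for any fixed $A^*$ there is a neighborhood on which $D_A g(\by)$ is a classical partial derivative, so $D_A[g(0)] = (D_A g)(0)$ with no ambiguity. Next I would invoke Proposition~\ref{prop:diffoponfourier} directly: it asserts the equality of tempered distributions $D_A g = \mc{F}(D_A(E(A,\bx))M(\bx))$. Combining these two steps and then re-applying the definition of the integral of a tempered distribution yields
\[ D_A K = (D_A g)(0) = \mc{F}\bigl(D_A(E(A,\bx))M(\bx)\bigr)(0) = \int d^n\bx\, d^n\ovl{\bx}\, D_A(E(A,\bx))M(\bx), \]
exactly as claimed.

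The only point requiring care is that the final integral must make sense in the tempered-distribution sense, i.e.\ that $\mc{F}(D_A(E(A,\bx))M(\bx))$ is represented by a function continuous at the origin. This is not an extra hypothesis to verify but rather a consequence already packaged into Proposition~\ref{prop:diffoponfourier}: the distribution $D_A g$ is represented by $D_A$ applied to the smooth function $g(\by)$, which is again smooth (hence continuous) in $\by$. Because this last observation is precisely what the preceding proposition guarantees, the corollary is essentially a formal bookkeeping step, and the main obstacle --- ensuring that differentiation in $A$ does not destroy the smoothness in $\by$ needed to evaluate at $0$ --- has already been absorbed into the work of the previous proposition.
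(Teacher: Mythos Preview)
Your proposal is correct and follows essentially the same route as the paper: both arguments set $\by=0$ in Proposition~\ref{prop:diffoponfourier} and check that evaluation at the origin is legitimate. The only cosmetic difference is that the paper justifies continuity of $\mc{F}(D_A(E(A,\bx))M(\bx))$ at the origin by observing $D_A(E(A,\bx))M(\bx)=P(\bx,\ovl{\bx})E(A,\bx)M(\bx)$ for a polynomial $P$, whereas you invoke the joint smoothness of $g(\by)$ in $A$ and $\by$; these are equivalent observations.
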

\begin{proof}
Note that
\[ (D_A(E(A,\bx))M(\bx) =P(\bx)E(A,\bx) M(\bx) \]
for some polynomial $P(\bx, \ovl{\bx})$ in $\bx, \ovl{\bx}$. This expression has a good Fourier transform in the sense of tempered distributions. Thus, it is enough to set $\by=0$ in Proposition \ref{prop:diffoponfourier}.
\end{proof}
\begin{prop} \label{prop:integrationbyparts}
Let $P(\bx, \ovl{\bx})$ be a polynomial in $\bx,\ovl{\bx}$. Then, for any $i$ we have 
\[ \int d^n \bx  d^n \ovl{\bx} \, \partial_i \left[ P(\bx, \ovl{\bx}) e^{(\bx,A\bx)-\ovl{(\bx,A\bx)}} M(\bx) \right] = 0.  \]
In particular, we can evaluate integrals of the form 
\[ \int d^n \bx d^n \ovl{\bx} \, P(\bx, \ovl{\bx}) e^{(\bx,A\bx)-\ovl{(\bx,A\bx)}} M(\bx) \]
by parts without boundary terms.
\end{prop}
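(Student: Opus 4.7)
The plan is to reduce the statement to the definition of the integral as the Fourier transform evaluated at the origin and exploit the standard exchange of differentiation with multiplication by a dual coordinate under $\mc{F}$. By definition, $\int d^n \bx\, d^n \ovl{\bx}\, h(\bx) = (\mc{F}h)(0)$ whenever $\mc{F}h$ is represented by a function continuous at the origin, so it suffices to prove that $\mc{F}[\partial_i(P \cdot E \cdot M)](0) = 0$, where $E = E(A,\bx)$ and $M = M(\bx)$ as in Equation~\eqref{eqn:EAxandMx}. Since on tempered distributions the Fourier transform satisfies $\mc{F}[\partial_i f](\by) = c\, y_i\, \mc{F}[f](\by)$ for an appropriate constant $c$ determined by the Fourier conventions of Lemma~\ref{LemFourierTransformOfQuadraticExponent} (with $\partial_i$ interpreted in the real coordinate dual to $x_i$ under $\CN^n \simeq \RN^{2n}$), the problem further reduces to showing that $\mc{F}[P \cdot E \cdot M]$ is given by a function continuous at $\by = 0$: the explicit factor of $y_i$ then forces the value at the origin to vanish.

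To verify this continuity, I would invoke the dual correspondence that multiplication by the polynomial $P(\bx, \ovl{\bx})$ on the physical side corresponds to a constant-coefficient differential operator $\tilde{P}(\partial_\by, \partial_{\ovl{\by}})$ on the Fourier side, yielding the distributional identity
\[ \mc{F}[P(\bx, \ovl{\bx}) \cdot E \cdot M] = \tilde{P}(\partial_\by, \partial_{\ovl{\by}})\, \mc{F}[E \cdot M]. \]
The preceding proposition established that $\mc{F}[E \cdot M]$ is represented by a function smooth in $\by$ (not merely a tempered distribution), so applying the constant-coefficient differential operator $\tilde{P}$ yields another smooth function. In particular, $\mc{F}[P \cdot E \cdot M]$ is smooth, hence continuous, at $\by = 0$.

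Combining the two steps, $\mc{F}[\partial_i(P \cdot E \cdot M)](\by) = c\, y_i\, \tilde{P}(\partial_\by, \partial_{\ovl{\by}})\, \mc{F}[E \cdot M](\by)$ vanishes at $\by = 0$ because of the explicit factor $y_i$, which proves the integral identity. The ``in particular'' statement is then immediate from the Leibniz rule: any integrand to which one wishes to apply integration by parts can be expressed as $\partial_i[P \cdot E \cdot M]$ for a suitable polynomial $P$, and the vanishing of its integral is precisely the claim that integration by parts produces no boundary contributions. The one technical point worth double-checking is that the Fourier-side operator $\tilde{P}$ acts unambiguously on $\mc{F}[E \cdot M]$; this is automatic from the smoothness established in the preceding proposition, and is the main place where all the distributional machinery developed earlier in the appendix is essential.
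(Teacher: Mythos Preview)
Your proposal is correct and follows essentially the same route as the paper: both arguments use that the integral is $\mc{F}[\cdot](0)$, convert multiplication by $P$ into a constant-coefficient differential operator on the Fourier side acting on the smooth function $g(\by)=\mc{F}[E\cdot M](\by)$ (smoothness coming from the preceding proposition), and then observe that the extra $\partial_i$ becomes a factor of $y_i$, which kills the expression at $\by=0$. The paper's proof is just a terser version of what you wrote.
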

\begin{proof}
We wish to compute these integrals in the sense of tempered distributions. Note that $g(\by)$ is smooth in $\by$, so we find that
\[ \int d^n \bx  d^n \ovl{\bx} \,  P(\bx, \ovl{\bx}) e^{(\bx,A\bx)-\ovl{(\bx,A\bx)}} M(\bx) = P( \p_{\by}, \p_{\ovl{\by}} ) g(\by),  \]
and the right hand side is a smooth function of $\by$. We thus find that 
\[ \int d^n \bx  d^n \ovl{\bx} \, \partial_i \left[ P(\bx, \ovl{\bx}) e^{(\bx,A\bx)-\ovl{(\bx,A\bx)}} M(\bx) \right] = y_i P( \p_{\by}, \p_{\ovl{\by}} ) g(\by) \bigg|_{\by = 0} = 0 .  \]
We are done.
\end{proof}

In summary, we have shown that we may sensibly define the following integral in the sense of tempered distributions:
\[ \int d^m \bx d^m \bar{\bx} \left[ \prod_{i = 1}^m \abs{x_i}^s \right] e^{(\bx,A\bx)-\ovl{(\bx,A \bx)}} . \]
We have then shown that we may safely differentiate under the integral sign and integrate by parts as we have done in Sec. \ref{subsec:twist}, justifying all of the algebraic manipulations in our proof of the twisted Gaiotto formula.

\bibliographystyle{abbrv}
\bibliography{bib}

\end{document}